\documentclass[10pt,oneside,reqno]{amsart}

\RequirePackage{fix-cm} 
\usepackage[english]{babel}
\usepackage{amssymb,amsmath,geometry,amsthm,graphicx,enumerate}
\usepackage[dvipsnames]{xcolor}
\usepackage{lmodern}

\usepackage[T1]{fontenc}
\usepackage{hyperref}
\hypersetup{
 colorlinks,
 citecolor=Maroon,
 linkcolor=Maroon,
 urlcolor=Maroon}

\geometry{letterpaper}

\newtheorem{definition}{Definition}
\newtheorem{lemma}{Lemma}
\newtheorem{proposition}{Proposition}

\newtheorem{theorem}{Theorem}

\theoremstyle{remark}
\newtheorem{remark}{\sc  Remark\rm}[section]
\newtheorem{notation}{\sc  Notation\rm}[section]

\newcommand{\assign}{:=}
\newcommand{\mathd}{\mathrm{d}}
\newcommand{\nobracket}{}
\newcommand{\of}{:}
\newcommand{\tmabbr}[1]{#1}
\newcommand{\tmcolor}[2]{{\color{#1}{#2}}}
\newcommand{\tmem}[1]{{\em #1\/}}
\newcommand{\tmmathbf}[1]{\ensuremath{\boldsymbol{#1}}}
\newcommand{\tmop}[1]{\ensuremath{\operatorname{#1}}}
\newcommand{\tmstrong}[1]{\textbf{#1}}
\newcommand{\tmtextit}[1]{{\itshape{#1}}}
\newcommand{\tmtextsc}[1]{{\scshape{#1}}}

%

\newcounter{marnote}

\newcommand{\Fk}{\mathcal{F}_{\kappa}}
\newcommand{\Gk}{\mathcal{G}_{\kappa}}
\newcommand{\myJ}{J}
\newcommand{\ns}{n_{\ast}}
\newcommand{\vhs}{\mathbf{{\ell}}'_2 (J)}
\newcommand{\ltwoJ}{\mathbf{{\ell}}_2 (J)}
\newcommand{\tmsep}{, }
\newcommand{\lapl}{\Delta}
\newcommand{\grad}{\nabla}

\newcommand{\RR}{\mathbb{R}}
\newcommand{\ZZ}{\mathbb{Z}}
\newcommand{\NN}{\mathbb{N}}

\newcommand{\Stwo}{\mathbb{S}}

\setlength{\oddsidemargin}{0.1in}
\setlength{\textwidth}{6in}
\setlength{\topmargin}{-0.25in}
\setlength{\textheight}{9.1in}
\spaceskip=.9\fontdimen2\font
      plus .9\fontdimen3\font
     minus .9\fontdimen4 

\setlength\arraycolsep{3pt}

\DeclareRobustCommand{\subtitle}[1]{\\#1}

\begin{document}

\title{On a sharp  Poincar{\'e}-type inequality on the  2-sphere  \subtitle{and its application in micromagnetics}}

\author{Giovanni Di Fratta}
\address{{Giovanni Di Fratta,\,\small{  Institute for Analysis \\
and Scientific Computing, TU Wien\\
Wiedner Hauptstra{\ss}e 8-10 \\
1040 Wien, Austria.}}}

\author{Valeriy Slastikov}
\address{{Valeriy Slastikov,\,\small{School of Mathematics \\
University of Bristol \\
University Walk, Bristol \\
BS8 1TW, United Kingdom.}}}

\author{Arghir Zarnescu}
\address{Arghir Zarnescu, \,\small{IKERBASQUE, Basque Foundation for Science, Maria Diaz de Haro 3,
48013, Bilbao, Bizkaia, Spain.}}
\address{\small{BCAM,  Basque  Center  for  Applied  Mathematics,  Mazarredo  14,  E48009  Bilbao,  Bizkaia,  Spain.}}
 \address {\small{``Simion Stoilow" Institute of the Romanian Academy, 21 Calea Grivi\c{t}ei, 010702 Bucharest, Romania.} }

\begin{abstract}
  
  The main aim of
  this note is to prove a sharp Poincar{\'e}-type inequality for vector-valued functions on $\Stwo^2$, that naturally emerges in the context of 
  micromagnetics of spherical thin films. 
  
\vspace{6pt}
\noindent \tmstrong{Keywords.} Poincar{\'e} inequality{\tmsep}{vector spherical harmonics}{\tmsep}{magnetic skyrmions}

\noindent \tmstrong{AMS subject classifications.} 35A23, 35R45, 49R05, 49S05, 82D40
\end{abstract}

\maketitle
\renewcommand{\subtitle}[1]{}

\section{Introduction}
The Poincar{\'e}-type inequalities are a crucial tool in analysis, as they provide a relation between the norms of a function and its gradient. As such they are deeply relevant in analytic models appearing in geometry, physics and biology. Such models often exhibit different qualitative behaviours for various ranges of  parameters and therefore sharply estimating the Poincar\'e constant is fundamental for a proper understanding of a model. 

 The Poincar\'e-type inequalities always involve some constraints on the target of the function in order to eliminate the constants, which are not seen by the gradient part. The most commonly used ones, for scalar-valued functions, involve either local restrictions (zero values on the boundary of the domain) or non-local ones (zero mean). The optimal constant strongly depends on the type of  constraint imposed and provides a piece of significant geometric information about the problem under consideration~{\cite{beckner1993sharp,zhu2004extremal,hebey1996sobolev}}.

  There exists an enormous body of literature about  Poincar{\'e}-type inequalities for {\it scalar-valued} functions but virtually nothing about {\it vector-valued} ones despite  their use in many physical contexts. The last four decades have witnessed an extraordinary interest in manifold-valued function spaces but Poincar\'e inequalities  naturally relevant in this context have not been explored much. The various constraints on the range of the vector-valued function, motivated by physical or geometrical considerations reduce the degrees of freedom allowed on the function and generate natural questions concerning the optimal constants. Such questions require special approaches, going beyond what is available in the scalar case.

We are interested in proving a sharp Poincar{\'e}-type inequality for vector-valued functions on the 2-sphere $\Stwo^2 \assign
\left\{ \xi \in \RR^3 \of | \xi | = 1 \right\}$ and using this result to obtain  non-trivial information about magnetization behaviour inside thin spherical shells.  Topological magnetic structures arising in non-flat geometries attract a lot of interest due to their potential in the application to magnetic devices \cite{streubel16}. Thin spherical shells are one of the simplest examples where an interplay between topology, geometry and curvature of the underlying space results in non-trivial magnetic structures \cite{sloika17}.

The magnetization distribution $\tmmathbf{u} \in
H^1( \Stwo^2, \Stwo^2)$ in thin spherical shells can be found by minimizing the following reduced micromagnetic energy \cite{di2016dimension,kravchuk2016topologically}
\begin{equation}
  \Fk (\tmmathbf{u}) = \int_{\Stwo^2} \left| \grad^{\ast}_{\xi}
  \tmmathbf{u} (\xi) \right|^2 \mathd \xi + \kappa \int_{\Stwo^2}
  (\tmmathbf{u} (\xi) \cdot \tmmathbf{n} (\xi))^2 \mathd \xi,
  \label{eq:micromagnefs2}
\end{equation}
where $\tmmathbf{n} (\xi) \assign \xi$ is the normal field to the unit
sphere and $\kappa \in \RR$ is an effective anisotropy parameter. Here, we have denoted by $\grad^{\ast} : H^1 ( \Stwo^2, \RR^3)
\rightarrow L^2 ( \Stwo^2, \RR^3)$ the tangential gradient on
$\Stwo^2$.

The existence of minimizers can be easily obtained using direct methods of the calculus of variations and  non-uniqueness of minimizers follows due to the invariance of the energy 
$\Fk$ under the orthogonal group.  An exact characterization of the minimizers in this problem is a non-trivial task and so far has been carried out only numerically \cite{sloika17}. However, sometimes it is enough to obtain a meaningful lower bound on the energy in order to gain some information of the ground states. This lower bound is typically obtained by relaxing the constraint $\tmmathbf{u} \in \Stwo^2$ to the following weaker constraint
\begin{equation}
  \frac{1}{4 \pi} \int_{\Stwo^2} | \tmmathbf{u} (\xi) |^2 \mathd \xi = 1.
  \label{eq:relaxedconstraint}
\end{equation}
This kind of relaxation, which physically corresponds to a passage from
classical physics to a probabilistic quantum mechanics perspective, has been
proved to be useful in obtaining non-trivial lower
bounds of the ground state micromagnetic energy (see eg {\cite{BrownA1968}}). 
Mathematically, replacing a constraint  $\tmmathbf{u} \in \Stwo^2$ with \eqref{eq:relaxedconstraint} puts us in a realm of Poincare-type inequalities, where in many cases the
relaxed problem can be solved exactly and the dependence of the
minimizers on the geometrical and physical properties of the model made
explicit. Sometimes this relaxation turns out to be helpful to obtain sufficient conditions for
minimizers to have specific geometric structures (see eg {\cite{BrownA1968}}).



We note that the constraint $| \tmmathbf{u} |^2 = 1$ {\tmabbr{a.e.}} on $\Stwo^2$ is
  equivalent to the following two energy constraints in terms of the $L^2$ and
  $L^4$ norms:
  \begin{equation}
    \frac{1}{4 \pi} \int_{\Stwo^2} | \tmmathbf{u} (\xi) |^2 \mathd \xi = 1
    \quad \text{and} \quad \frac{1}{4 \pi} \int_{\Stwo^2} | \tmmathbf{u} (\xi)
    |^4 \mathd \xi = 1.
  \end{equation}
 This observation follows from the Cauchy-Schwartz inequality
  \begin{equation}
    4 \pi = (| \tmmathbf{u} |^2, 1)_{L^2 ( \Stwo^2, \RR^3)}
    \leqslant \| | \tmmathbf{u} |^2 \|_{L^2 ( \Stwo^2, \RR^3)}  \|
    1 \|_{L^2 \left( \Stwo^2 \right)} = 4 \pi, \label{eq:condnecconst}
  \end{equation}
 where equality holds when $| \tmmathbf{u} |^2$ is a constant. Therefore our relaxed problem is the one obtained by removing the
  $L^4$ constraint.

\vskip 0.2cm

\noindent{\bf Main results.} Our results include the precise characterization of the minimal value and global minimizers of the energy
functional $\Fk$, defined in \eqref{eq:micromagnefs2}, on the space of $H^1 ( \Stwo^2, \RR^3)$ vector
fields satisfying the relaxed constraint \eqref{eq:relaxedconstraint}. In particular, we prove the following Poincar\'e-type inequality:

\begin{theorem}[Poincar{\'e} inequality on $\Stwo^2$]
  \label{thm:mainthmpoinc}Let $\kappa \in \RR$. For every $\tmmathbf{u} \in
  H^1 ( \Stwo^2, \RR^3)$ the following inequality holds:
  \begin{equation}
    \int_{\Stwo^2} \left| \grad^{\ast}_{\xi} \tmmathbf{u} (\xi) \right|^2
    \mathd \xi + \kappa \int_{\Stwo^2} (\tmmathbf{u} (\xi) \cdot \tmmathbf{n}
    (\xi))^2 \mathd \xi \; \geqslant \gamma (\kappa) \int_{\Stwo^2} |
    \tmmathbf{u} (\xi) |^2 \mathd \xi, \label{eq:PoincInequ}
  \end{equation}
  with 
\begin{equation} \label{eq:gamma}
\gamma (\kappa) \assign \left\{
\begin{array}{cc}
 \kappa+2 & \quad \text{if }\;\kappa \leqslant -4, \\
 \frac{1}{2} (\nobracket (\kappa + 6) -
  \sqrt{\kappa^2 + 4 \kappa + 36} \nobracket) & \quad \text{if }\;\kappa > - 4 \, .
\end{array}
\right.
\end{equation}
For any $\kappa \in \RR$ the equality in
  {\tmem{{\eqref{eq:PoincInequ}}}} holds {\tmstrong{if and only if}} the function $\tmmathbf{u}$ has
    the following form in terms of vector
  spherical harmonics $($see Section~\emph{\ref{sec:setup}}, Definition~\emph{\ref{def:sh}}$)$
  \begin{equation} \label{eq:globminimizer}
    \tmmathbf{u} (\xi) = c_0 \tmmathbf{y}_{0, 0}^{(1)} (\xi) + \sum_{j = - 1}^1 \sigma_j \tmmathbf{y}_{1, j}^{(1)} (\xi) + \tau_j
    \tmmathbf{y}_{1, j}^{(2)} (\xi),
  \end{equation}
  where coefficients $c_0, (\sigma_j, \tau_j)_{|j|\leqslant 1}$ are defined as follows
\begin{itemize}
\item  if $\kappa < - 4$ then $c_0 = \pm \sqrt{4\pi}$, $\sigma_j=\tau_j=0$ for $|j|\leqslant 1$;
\item if $\kappa > - 4$ then 
\begin{equation} \label{eq:u1andu2relmainthm}
c_0=0,\quad \tau_j = \frac{- 2 \sqrt{2}}{(\gamma(\kappa) - 2)} \sigma_j \quad \forall | j | \leqslant 1,\quad \sum_{|j|\leqslant 1} \sigma_j^2 = 2 \pi \frac{- (\kappa + 2) + \sqrt{\kappa^2 + 4 \kappa
  + 36}}{\sqrt{\kappa^2 + 4 \kappa + 36}};
\end{equation}
    \item if $\kappa = - 4$ then
\begin{equation} \label{eq:u1andu2rel2mainthm}
\tau_j = \frac{ \sqrt{2}}{2} \sigma_j \quad \forall | j | \leqslant 1,\quad 2 c_0^2 + 3 \sum_{|j|\leqslant 1} \sigma_j^2 = 8
\pi .
\end{equation}
  \end{itemize}
\end{theorem}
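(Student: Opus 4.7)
The plan is to diagonalise the energy functional $\mathcal{F}_\kappa$ in the orthonormal basis of vector spherical harmonics $\{\mathbf{y}_{\ell,m}^{(i)}\}$, reducing the problem to a collection of decoupled finite-dimensional eigenvalue problems indexed by $(\ell,m)$. I would first expand $\mathbf{u}\in H^1(\mathbb{S}^2,\mathbb{R}^3)$ as $\mathbf{u}=\sum u_{\ell,m}^{(i)}\mathbf{y}_{\ell,m}^{(i)}$ and use orthonormality to write $\int_{\mathbb{S}^2}|\mathbf{u}|^2\,d\xi=\sum|u_{\ell,m}^{(i)}|^2$. Next, I would show that the normal trace projects onto a single family: because $\mathbf{y}_{\ell,m}^{(1)}$ is the radial harmonic $\xi Y_{\ell,m}$ (up to normalisation) while $\mathbf{y}_{\ell,m}^{(2)}$ and $\mathbf{y}_{\ell,m}^{(3)}$ are tangential, one has $\mathbf{u}\cdot\mathbf{n}=\sum_{\ell,m}u_{\ell,m}^{(1)}Y_{\ell,m}$, whence $\int_{\mathbb{S}^2}(\mathbf{u}\cdot\mathbf{n})^2=\sum_{\ell,m}|u_{\ell,m}^{(1)}|^2$.

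The crux is expressing the Dirichlet energy $\int_{\mathbb{S}^2}|\nabla^{*}_{\xi}\mathbf{u}|^2$ in the VSH basis. Using the known action of $\nabla^{*}$ on radial and tangential VSH (or, equivalently, integration by parts against the vector Laplace--Beltrami operator), I expect the energy to decouple in $(\ell,m)$ and to split the curl-type modes $\mathbf{y}_{\ell,m}^{(3)}$ (contributing $\ell(\ell+1)|u_{\ell,m}^{(3)}|^2$) from a $2\times 2$ quadratic form coupling the radial and gradient-tangential modes $\mathbf{y}_{\ell,m}^{(1)},\mathbf{y}_{\ell,m}^{(2)}$. After inserting the $\kappa$-term (which only touches type (1)), the full functional becomes
\begin{equation*}
\mathcal{F}_\kappa(\mathbf{u})=\sum_{\ell\geq0,|m|\leq\ell}\bigl\langle M_\ell^{(\kappa)}\, v_{\ell,m},\,v_{\ell,m}\bigr\rangle+\sum_{\ell\geq 1,|m|\leq\ell}\ell(\ell+1)|u_{\ell,m}^{(3)}|^2,
\end{equation*}
where $v_{\ell,m}=(u_{\ell,m}^{(1)},u_{\ell,m}^{(2)})$ and $M_\ell^{(\kappa)}$ is an explicit symmetric $2\times 2$ matrix. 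A short direct computation for $\ell=0$ yields $M_0^{(\kappa)}=(2+\kappa)$ (there is no type (2) mode), and for $\ell=1$ one gets $M_1^{(\kappa)}=\bigl(\begin{smallmatrix}4+\kappa & 2\sqrt{2}\\ 2\sqrt{2} & 2\end{smallmatrix}\bigr)$, whose smaller eigenvalue is exactly $\tfrac{1}{2}\bigl((\kappa+6)-\sqrt{\kappa^2+4\kappa+36}\bigr)$. Identifying this computation correctly — in particular pinning down the off-diagonal entry $2\sqrt{2}$ and being careful with normalisation conventions for VSH — will be the main technical obstacle.

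Once the spectral decomposition is in hand, the Poincaré constant is $\gamma(\kappa)=\min_{\ell}\gamma_\ell(\kappa)$, with $\gamma_\ell$ the smallest eigenvalue of $M_\ell^{(\kappa)}$ together with the curl eigenvalue $\ell(\ell+1)$. I would verify by a monotonicity-in-$\ell$ argument (the diagonal entries of $M_\ell^{(\kappa)}$ grow like $\ell(\ell+1)$, while the off-diagonal coupling stays of lower order) that for $\ell\geq 2$ the eigenvalues exceed both $\gamma_0(\kappa)=\kappa+2$ and $\gamma_1(\kappa)$. Solving $\gamma_0(\kappa)=\gamma_1(\kappa)$ gives the threshold $\kappa=-4$, and a sign analysis of $\gamma_0-\gamma_1$ on either side produces the dichotomy in \eqref{eq:gamma}.

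Finally, equality in \eqref{eq:PoincInequ} forces all coefficients outside the eigenspace of $\gamma(\kappa)$ to vanish. For $\kappa<-4$ this leaves only the $\ell=0$ radial mode; for $\kappa>-4$ only the $\ell=1$ modes survive, and the eigenvector relation $(4+\kappa-\gamma)\sigma+2\sqrt{2}\,\tau=0$ gives $\tau_j=-\tfrac{2\sqrt{2}}{\gamma(\kappa)-2}\sigma_j$, with the overall scaling fixed by $\int|\mathbf{u}|^2=4\pi$; computing $|c_0|^2+\sum(|\sigma_j|^2+|\tau_j|^2)$ and substituting the ratio yields $\sum|\sigma_j|^2=2\pi\frac{-(\kappa+2)+\sqrt{\kappa^2+4\kappa+36}}{\sqrt{\kappa^2+4\kappa+36}}$ as claimed. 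The borderline case $\kappa=-4$ is handled by taking any convex combination of the two eigenspaces, giving the affine constraint \eqref{eq:u1andu2rel2mainthm}.
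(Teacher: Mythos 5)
Your proposal is correct and rests on the same essential foundation as the paper's --- the block-diagonalisation of the energy in the vector spherical harmonics basis, where each degree $\ell$ contributes an explicit $2\times 2$ quadratic form coupling the radial mode $\tmmathbf{y}_{\ell,m}^{(1)}$ to the gradient-tangential mode $\tmmathbf{y}_{\ell,m}^{(2)}$, plus a decoupled diagonal term for $\tmmathbf{y}_{\ell,m}^{(3)}$. After that, however, the two routes diverge in a real way. The paper does \emph{not} diagonalise each $M_\ell^{(\kappa)}$; instead it derives the Euler--Lagrange system in sequence space with a Lagrange multiplier and uses a pair of test configurations to produce the a priori bound $\min\Gk < \min\{8\pi,\,4\pi(\kappa+2)\}$, from which it deduces, via sign and size arguments on the coefficients, that any minimiser has $\hat u_3 \equiv 0$ and all $\hat u_1,\hat u_2$ coefficients vanishing for $n\geq 2$. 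You short-circuit this: you compute the smallest eigenvalue $\lambda_-(\ell)=\tfrac{1}{2}\bigl(2\ell(\ell+1)+2+\kappa-\sqrt{(\kappa+2)^2+16\ell(\ell+1)}\bigr)$ of each block directly, check it is strictly increasing in $\ell$ for $\ell\geq 1$ (the derivative in $s=\ell(\ell+1)$ is $1-4/\sqrt{(\kappa+2)^2+16s}>0$ for $s\geq 2$), observe that the curl eigenvalues $\ell(\ell+1)\geq 2>\gamma(\kappa)$ are never attained, and reduce immediately to comparing $\lambda_-(0)=\kappa+2$ with $\lambda_-(1)=\gamma_+(\kappa)$. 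This is cleaner and a bit more transparent than the paper's Lagrange-multiplier route, and it yields exactly the same dichotomy at $\kappa=-4$ and the same characterisation of the equality cases, including the convex-combination eigenspace at the threshold.

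One bookkeeping issue you should fix before writing this up: with the paper's convention $\tmmathbf{y}_{n,j}^{(2)}=\frac{1}{\sqrt{n(n+1)}}\grad^*_\xi Y_{n,j}$, the vector Laplace--Beltrami action gives $-\Delta^*\tmmathbf{y}^{(1)}_{n,j}=(n(n+1)+2)\tmmathbf{y}^{(1)}_{n,j}-2\sqrt{n(n+1)}\,\tmmathbf{y}^{(2)}_{n,j}$, so the off-diagonal entry of $M_1^{(\kappa)}$ is $-2\sqrt{2}$, not $+2\sqrt{2}$. This does not change the eigenvalues (only $\mathrm{tr}$ and $\det$ enter), but it does change the sign in the eigenvector relation: the second row gives $-2\sqrt 2\,\sigma+(2-\gamma)\tau=0$, i.e.\ $\tau_j=-\tfrac{2\sqrt 2}{\gamma(\kappa)-2}\sigma_j$, which is the paper's formula. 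With your stated $+2\sqrt 2$ entry you would instead get $\tau_j=+\tfrac{2\sqrt 2}{\gamma(\kappa)-2}\sigma_j$, so as written your eigenvector conclusion is inconsistent with your own matrix. Pinning down the sign convention for $\tmmathbf{y}^{(2)}$ resolves this harmlessly.
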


\begin{figure}[t]
  \scalebox{0.95}{\includegraphics{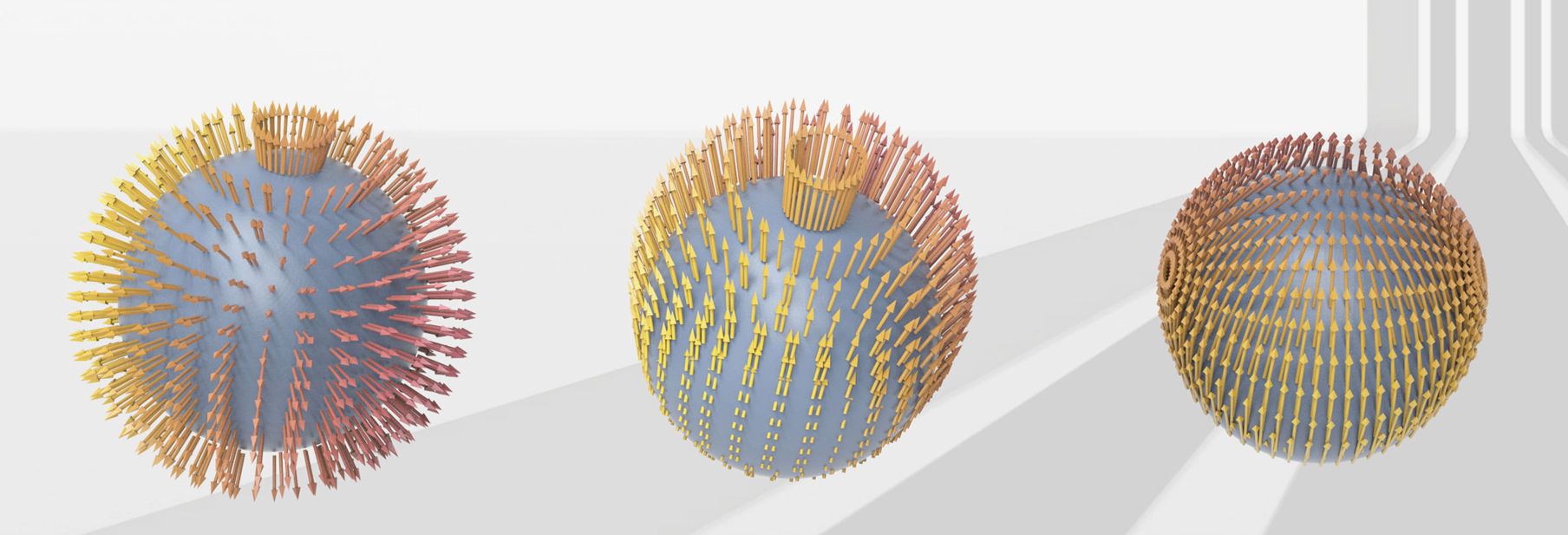}}
  \caption{\label{fig:3} Examples of vector fields
for which the equality sign is attained in the Poincar{\'e} inequality {\eqref{eq:PoincInequ}}. (Left) $\kappa = - 8$;
  (Center) $\kappa = - 4$; (Right) $\kappa = 6$.}
\end{figure}


We discover, surprisingly, that for $k \leqslant -4$ the unique minimizer of the relaxed problem coincides with the unique minimizer of $\Fk$ under {\bf the pointwise constraint }$| \tmmathbf{u} (\xi) |=1$. Thus, as a byproduct of Theorem~\ref{thm:mainthmpoinc} we obtain the following characterization of micromagnetic ground states in thin spherical shells.
\begin{theorem}[Micromagnetic ground states in thin spherical shells] \label{thm:2MMGSs}
For every $\kappa \in \RR$, the normal vector
fields $\pm \tmmathbf{n} (\xi)$ are stationary points of the micromagnetic
energy functional $\Fk$ given by {\eqref{eq:micromagnefs2}} on the space $H^1(\Stwo^2, \Stwo^2)$. Moreover, they are strict local
minimizers for every $\kappa < 0$ and are unstable for $\kappa >0$. If $\kappa \leqslant - 4$, the normal
vector fields $\pm \tmmathbf{n} (\xi)$ are the only global minimizers of
$\Fk$.
\end{theorem}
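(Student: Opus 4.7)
The plan is to break the proof into three pieces: verify that $\pm\tmmathbf{n}$ are stationary points via the Euler--Lagrange equation, analyse the second variation to obtain strict local minimality for $\kappa<0$ and instability for $\kappa>0$, and then invoke Theorem~\ref{thm:mainthmpoinc} to identify the global minimizers when $\kappa\leqslant-4$.

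For stationarity, the Euler--Lagrange system associated to minimizing $\Fk$ under the pointwise constraint $|\tmmathbf{u}|=1$ reads $-\lapl^{\ast}\tmmathbf{u}+\kappa(\tmmathbf{u}\cdot\tmmathbf{n})\tmmathbf{n}=\lambda(\xi)\tmmathbf{u}$ with a scalar Lagrange multiplier $\lambda$. Since each Cartesian component of $\tmmathbf{n}(\xi)=\xi$ is a degree-one spherical harmonic, $-\lapl^{\ast}\tmmathbf{n}=2\tmmathbf{n}$; plugging $\tmmathbf{u}=\tmmathbf{n}$ in yields the constant multiplier $\lambda\equiv 2+\kappa$, confirming that $\pm\tmmathbf{n}$ are critical points for every $\kappa\in\RR$. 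For the local analysis I would expand $\Fk$ to second order along a smooth admissible path $\tmmathbf{u}_t\subset H^1(\Stwo^2,\Stwo^2)$ with $\tmmathbf{u}_0=\tmmathbf{n}$ and $\dot{\tmmathbf{u}}_0=\tmmathbf{v}$. Differentiating $|\tmmathbf{u}_t|^2\equiv 1$ once and twice gives $\tmmathbf{v}\cdot\tmmathbf{n}=0$ and $\tmmathbf{n}\cdot\ddot{\tmmathbf{u}}_0=-|\tmmathbf{v}|^2$, which reduces the second variation (after integration by parts and using $-\lapl^{\ast}\tmmathbf{n}=2\tmmathbf{n}$) to
\begin{equation*}
\tfrac{d^2}{dt^2}\Fk(\tmmathbf{u}_t)\big|_{t=0}=2\int_{\Stwo^2}|\grad^{\ast}_{\xi}\tmmathbf{v}|^2\mathd\xi-2(2+\kappa)\int_{\Stwo^2}|\tmmathbf{v}|^2\mathd\xi .
\end{equation*}

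The next step is a sharp spectral bound on tangential vector fields. Using the Gauss identity $|\grad^{\ast}\tmmathbf{v}|^2=|\nabla^{\top}\tmmathbf{v}|^2+|\tmmathbf{v}|^2$ for tangential $\tmmathbf{v}$ together with the Hodge decomposition and Bochner's identity on $\Stwo^2$ (or, equivalently, the spectral expansion in the tangential vector spherical harmonics $\tmmathbf{y}^{(2)}_{l,j},\tmmathbf{y}^{(3)}_{l,j}$ from Section~\ref{sec:setup}), one obtains the sharp inequality $\int|\grad^{\ast}\tmmathbf{v}|^2\geqslant 2\int|\tmmathbf{v}|^2$, with equality precisely on the six-dimensional subspace spanned by the Killing fields $\tmmathbf{a}\times\xi$ and the tangential gradients $\grad^{\ast}(\tmmathbf{a}\cdot\xi)$, $\tmmathbf{a}\in\RR^3$. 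This gives $\tfrac{d^2}{dt^2}\Fk(\tmmathbf{u}_t)|_{t=0}\geqslant-2\kappa\int|\tmmathbf{v}|^2$, strictly positive for $\kappa<0$ (local minimality), and strictly negative on any Killing test field $\tmmathbf{v}(\xi)=\tmmathbf{a}\times\xi$ when $\kappa>0$ (instability).

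Finally, for the global statement at $\kappa\leqslant-4$ I would apply Theorem~\ref{thm:mainthmpoinc} with $\gamma(\kappa)=\kappa+2$: any admissible $\tmmathbf{u}$ satisfies $\Fk(\tmmathbf{u})\geqslant 4\pi(\kappa+2)$, while $|\grad^{\ast}\tmmathbf{n}|^2\equiv 2$ and $(\tmmathbf{n}\cdot\tmmathbf{n})^2\equiv 1$ give $\Fk(\tmmathbf{n})=4\pi(\kappa+2)$, so $\pm\tmmathbf{n}$ realize the lower bound. For $\kappa<-4$ the equality characterization \eqref{eq:globminimizer} then directly forces $\tmmathbf{u}=\pm\tmmathbf{n}$. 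The main obstacle is the endpoint $\kappa=-4$, where \eqref{eq:u1andu2rel2mainthm} leaves a multi-parameter family of equality cases that must still be intersected with the constraint $|\tmmathbf{u}|\equiv 1$. To treat it I would expand $\tmmathbf{u}$ using the standard identifications $\tmmathbf{y}^{(1)}_{0,0}\propto\tmmathbf{n}$, $\tmmathbf{y}^{(1)}_{1,j}\propto Y_{1,j}\tmmathbf{n}$, $\tmmathbf{y}^{(2)}_{1,j}\propto\grad^{\ast}Y_{1,j}$, substitute $\tau_j=(\sqrt{2}/2)\sigma_j$, and write $\sum_j\sigma_j Y_{1,j}(\xi)=\mathbf{a}\cdot\xi$ for some $\mathbf{a}\in\RR^3$. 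Imposing $|\tmmathbf{u}(\xi)|^2=1$ pointwise then reduces to a quadratic identity in $t=\mathbf{a}\cdot\xi$ whose $t^2$-coefficient is a fixed nonzero constant, forcing $\mathbf{a}=0$; the constant term subsequently gives $c_0=\pm\sqrt{4\pi}$, so $\tmmathbf{u}=\pm\tmmathbf{n}$ and the proof is complete.
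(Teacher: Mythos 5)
Your proof is correct and follows essentially the same structure as the paper's: stationarity via the Euler--Lagrange equation and $-\lapl^{\ast}\tmmathbf{n}=2\tmmathbf{n}$, second variation reduced to a sharp Poincar\'e inequality on tangential fields, and global minimality for $\kappa\leqslant-4$ from Theorem~\ref{thm:mainthmpoinc}. Two places where you diverge are worth noting. First, you derive the bound $\int_{\Stwo^2}|\grad^{\ast}\tmmathbf{v}|^2\geqslant 2\int_{\Stwo^2}|\tmmathbf{v}|^2$ for tangential $\tmmathbf{v}$ by the Gauss formula $|\grad^{\ast}\tmmathbf{v}|^2=|\nabla^{\top}\tmmathbf{v}|^2+|\tmmathbf{v}|^2$ together with Bochner--Weitzenb\"ock on $\Stwo^2$, whereas the paper reads it off from the $\kappa\to+\infty$ limit of \eqref{eq:PoincInequ} (Remark~\ref{rem12} and inequality \eqref{eq:PoincareTangent}); both are valid and identify the same six-dimensional equality space. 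Second, and more substantively, your treatment of the endpoint $\kappa=-4$ is more careful than the paper's. The paper dispatches the whole range $\kappa\leqslant-4$ with ``clear from Theorem~\ref{thm:mainthmpoinc}'', but at $\kappa=-4$ the equality class in the Poincar\'e inequality given by \eqref{eq:u1andu2rel2mainthm} is a genuine continuum, and uniqueness of $\pm\tmmathbf{n}$ among $H^1(\Stwo^2,\Stwo^2)$ minimizers only follows after intersecting this class with the pointwise constraint $|\tmmathbf{u}|\equiv 1$. Your explicit computation does exactly this: writing $\sum_j\sigma_j Y_{1,j}=\tmmathbf{b}\cdot\xi$, substituting $\tau_j=(\sqrt{2}/2)\sigma_j$, and setting $t=\tmmathbf{b}\cdot\xi$ gives $|\tmmathbf{u}(\xi)|^2 = \tfrac{c_0^2}{4\pi}+\tfrac14|\tmmathbf{b}|^2+\tfrac{2c_0}{\sqrt{4\pi}}t+\tfrac34 t^2$, and the $t^2$-coefficient $3/4\neq 0$ forces $\tmmathbf{b}=0$ and then $c_0=\pm\sqrt{4\pi}$. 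This fills in a step the paper leaves implicit, so your write-up is if anything slightly more complete.
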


\begin{remark} Although the inequality  \eqref{eq:PoincInequ} holds for any $\kappa\in\RR$, it is sometimes more convenient to restate it in the standard form where both the term on the right side and the term on the left side are non-negative. Therefore when $\kappa \geqslant 0$ we can use {\tmem{{\eqref{eq:PoincInequ}}}} and if $\kappa<0$ we note that 
  $| \tmmathbf{u} (\xi) \times \tmmathbf{n} (\xi) |^2 - |
  \tmmathbf{u} (\xi) |^2 = - (\tmmathbf{u} (\xi) \cdot \tmmathbf{n} (\xi))^2$,
  and rewrite relation {\tmem{{\eqref{eq:PoincInequ}}}}  in the following way
  \begin{equation}\label{eq:PoincInequ-}
    \int_{\Stwo^2} \left| \grad^{\ast}_{\xi} \tmmathbf{u} (\xi) \right|^2
    \mathd \xi + | \kappa | \int_{\Stwo^2} | \tmmathbf{u} (\xi) \times
    \tmmathbf{n} (\xi) |^2 \mathd \xi \; \geqslant \; (| \kappa | + \gamma
    (\kappa)) \int_{\Stwo^2} | \tmmathbf{u} (\xi) |^2 \mathd \xi,
  \end{equation}
  with $| \kappa | \geqslant | \kappa | + \gamma (\kappa) \geqslant 0$ and the
  tangential part of the vector field appearing on the left-hand side.
  
Plots of the best constants $\kappa \in \RR \mapsto
\gamma (k)$ and $\kappa \in \RR \mapsto
\gamma (k) + |\kappa|$ for $\kappa>0$ and $\kappa<0$, respectively,  are given in Figure~\ref{Fig:bestconst}. Examples of vector fields
for which the equality sign is attained in {\eqref{eq:PoincInequ}} are
depicted in Figure~\ref{fig:3}. We note that for $\kappa < -4$ the minimizing
configurations are normal vector fields, for $\kappa \gg 1$ the tangential configurations are favoured and for the critical case $\kappa=-4$ various minimizing states may coexist.
\end{remark}

\begin{remark}\label{rem12}
  Note that the maximum value of $\gamma (\kappa)$ (see Figure~\ref{Fig:bestconst}) is reached at $\kappa=+ \infty$,
  where $\gamma (+ \infty) = 2$. It follows that for purely
  {\tmem{tangential}} vector fields one has the Poincar{\'e} inequality
  \begin{equation} \label{eq:PoincareTangent}
    \frac{1}{2} \int_{\Stwo^2} \left| \grad^{\ast}_{\xi} \tmmathbf{u} (\xi)
    \right|^2 \mathd \xi \; \geqslant \int_{\Stwo^2} | \tmmathbf{u} (\xi) |^2
    \mathd \xi .
  \end{equation}
The inequality \eqref{eq:PoincareTangent} is sharp as equality is achieved, for instance, by a vector field $\tmmathbf{u}(\xi)=\pm \sqrt{4\pi} \tmmathbf{y}_{1, 0}^{(2)} (\xi)$. In fact, one can characterize all vector fields delivering optimal Poincar{\'e} constant by taking the limit for $\kappa\to+\infty$ of the coefficients $\tau_j$ in \eqref{eq:u1andu2relmainthm}. 
\end{remark}

\begin{remark}
We note that Theorem~\ref{thm:2MMGSs} implies that the minimizers of micromagnetic energy don't have full radial symmetry in the case $\kappa>0$. It follows from the fact that the only radially symmetric vector fields are $\pm \tmmathbf{n} (\xi)$ and these are unstable for $\kappa>0$. 
\end{remark}

\begin{figure}[t]
\includegraphics[width=5.2in]{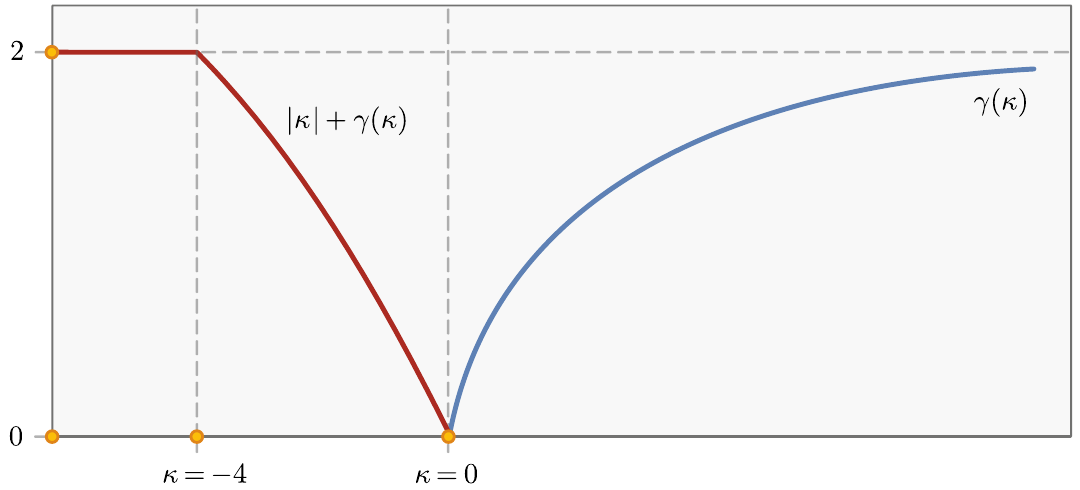}
  \caption{\label{Fig:bestconst} The values of the best constants in the Poincar{\'e} inequalities \eqref{eq:PoincInequ} and \eqref{eq:PoincInequ-} for $\kappa>0$ and $\kappa<0$, respectively.}
\end{figure}

\begin{remark}
It is worth noting that,  in the language of modern physics, the two ground states $\pm \tmmathbf{n}$ carry a different {\it skyrmion number} (or  topological charge). Indeed, since $\text{deg} (\pm \tmmathbf{n})=\pm 1$, by Hopf theorem~{\cite{milnor1997topology}}, these two configurations cannot be homotopically mapped one into the other and are, therefore, topologically protected against external perturbations and thermal fluctuations.
These considerations make the two ground states $\pm \tmmathbf{n}$ promising in view of novel spintronic devices \cite{fert2008nobel,fert2013skyrmions}.

We also want to point out a correspondence between our Theorem~\ref{thm:2MMGSs} and Brown's fundamental theorem on fine ferromagnetic particles~{\cite{BrownA1968,di2012generalization,alouges2015liouville}}, as Theorem~\ref{thm:2MMGSs}  implies an existence of a critical value $\kappa_0<0$ below which the only ground states are $\pm \tmmathbf{n} (\xi)$.
\end{remark}

%

\vskip 0.2cm
In the following, in Section~\ref{sec:setup}, we define suitable vector spherical harmonics. Afterwards, in Section~\ref{sec:representationsequences}, by the means of these vector spherical harmonics, we recast the minimization problem for $\Fk$ as a constrained minimization problem on a suitable space of sequences. Then, in Section~\ref{sec:uppboundmin}, by proper use of the Euler-Lagrange equations in sequence space, we derive necessary minimality conditions which allow us to reduce the infinite dimensional problem to a finite dimensional one. Finally, arguments based on the method of Lagrange multipliers complete the proof of Theorem~\ref{thm:mainthmpoinc} and afterwards of Theorem~\ref{thm:2MMGSs}.
\section{Notation and setup. Vector Spherical Harmonics}\label{sec:setup}
 In this section, we define a natural basis  and characterize vector spherical harmonics on the unit sphere $\Stwo^2$, see {\cite{freeden2008spherical}}. 
Every point $\xi \in \Stwo^2$ can be expressed via the polar coordinates
parametrization 
\begin{equation}
\sigma (\varphi, t) = \left( {\sqrt{1 - t^2}} \cos \,
\varphi,  {\sqrt{1 - t^2}} \sin \, \varphi, t \right),
\end{equation}
where
$\varphi \in [0, 2 \pi)$ is the longitude, $t = \cos \theta \in [- 1, 1]$ is
the polar distance and $\theta \in [0, \pi]$ the latitude.

We can define the surface gradient operator $\grad_{\xi}^{\ast}$ for {\tmabbr{a.e.}} $\xi \in \Stwo^2$ in the following way
\begin{equation}
  \grad_\xi^{\ast} = \varepsilon^{\varphi} \frac{1}{{\sqrt{1 - t^2}} }
  \partial_{\varphi} + \varepsilon^t {\sqrt{1 - t^2}} \partial_t,
\end{equation}
where $\varepsilon^{\varphi} (\varphi, t) \assign (- \sin \varphi, \cos
\varphi, 0)$, $\varepsilon^t (\varphi, t) \assign \left( - t \cos \varphi, - t
\sin \varphi, {\sqrt{1 - t^2}}  \right)$. For any $u
\in C^2 ( \Stwo^2, \RR )$, the Laplace-Beltrami operator
is defined as 
\begin{equation}
\Delta_{\xi}^{\ast} u (\xi) \assign \grad_{\xi}^{\ast} \cdot
\grad_{\xi}^{\ast} u (\xi). 
\end{equation}

\begin{notation}
  We denote by $\NN$ the set of positive integers, by $\NN_0$ the set of
  non-negative integers. For every $n \in \NN$ we set $\NN_n \assign \{
  1, 2, \ldots, n \}$ and $\ZZ_n \assign \{ 0, \pm 1, \ldots, \pm n \}$, for every $N \in \NN_0$ we introduce the set $\myJ_N \subseteq \NN_0 \times \ZZ$ consisting of all
  pairs $(n, j) \in \NN_0 \times \ZZ$ such that $n \leqslant N$ and $| j |
  \leqslant n$. We set $\myJ \assign \myJ_{\infty}$.
\end{notation}

Vector spherical harmonics are an extension of the scalar spherical harmonics
to square-integrable vector fields on the sphere; in fact, they can be
introduced in terms of the scalar spherical harmonics and their derivatives.
Motivated by different physical problems, various sets of vector spherical
harmonics have been introduced in the literature. The system that best fit our
purposes is the one introduced in {\cite{barrera1985vector}}, and obtained
from the splitting of vector fields into a radial and tangential component.
We have the following
definition (see {\cite{freeden2008spherical}}).

\begin{definition}\label{def:sh}
  The vector spherical harmonics $\tmmathbf{y}_{n, j}^{(1)}, \tmmathbf{y}_{n,
  j}^{(2)}$, and $\tmmathbf{y}_{n, j}^{(3)}$ of degree $n$ and order $j$, with
  $(n, j) \in \myJ$, are defined by
  \begin{equation}
    \tmmathbf{y}_{n, j}^{(1)} (\xi) \assign   Y_{n, j} (\xi)\tmmathbf{n}(\xi), \quad
    \tmmathbf{y}_{n, j}^{(2)} (\xi) \assign \frac{1}{\sqrt{\ns}}
    \grad^{\ast}_{\xi} Y_{n, j} (\xi), \quad \tmmathbf{y}_{n, j}^{(3)} (\xi)
    \assign \frac{1}{\sqrt{\ns}} \grad^{\ast}_{\xi} \wedge Y_{n, j} (\xi) ,
    \label{eq:VSHSys}
  \end{equation} where $\ns \assign n (n + 1)$.
  Here, for every $(n, j) \in \myJ$, the function $Y_{n, j}$ is the
  real-valued scalar spherical harmonics of degree $n$ and order $j$, defined
  by
  \begin{equation}
    Y_{n, j} (\xi) \assign \left\{ \begin{array}{ll}
      \sqrt{2} X_{n, | j |} (t) \cos (j \varphi) & \text{if } - n \leqslant j
      < 0,\\
      X_{n, 0} (\theta) & \text{if } j = 0,\\
      \sqrt{2} X_{n, j} (t) \sin (j \varphi) & \text{if } 0 < j \leqslant n,
    \end{array} \right.
  \end{equation}
  where for every $t \in [- 1, 1]$ and every $0 \leqslant j \leqslant n$
  \begin{eqnarray}
    X_{n, j} (t) & = & (- 1)^j \sqrt{\left( \frac{2 n + 1}{4 \pi} \right)
    \frac{(n - j) !}{(n + j) !}} P_{n, j} (t), 
  \end{eqnarray}
  and $P_{n, j}$ is the associate Legendre polynomial given by $P_{n, j} (t)
  \assign \frac{1}{2^n n!} (1 - t^2)^{j / 2} \partial^{n + j}_t (t^2 - 1)^n$.
\end{definition}

It is well-known ({\tmabbr{cf.}}
{\cite{barrera1985vector,nedelec2001acoustic}}) that the system $(Y_{n,
j})_{(n, j) \in \myJ}$ so defined is a complete orthonormal system for $L^2
\left( \Stwo^2, \RR \right)$, consisting of eigenfunctions of the Laplace-Beltrami
operator. Precisely, for every $n \in \NN_0$ we have $- \lapl_{\xi}^{\ast}
Y_{n, j} = \ns Y_{n, j}$ with $\ns \assign n (n + 1)$. Not so widely known
seems to be that the system of vector spherical harmonics 
is complete in $L^2 ( \Stwo^2, \RR^3)$ and forms an
orthonormal system ({\tmabbr{cf.}}~{\cite{freeden2008spherical}}). Therefore,
any vector field $\tmmathbf{u} \in L^2 ( \Stwo^2, \RR^3 )$ can be
represented by its Fourier series:
\begin{equation}
  \sum_{i \in \NN_3} \sum_{(n, j) \in \myJ} \hat{u}^{(i)} (n, j)
  \tmmathbf{y}_{n, j}^{(i)} = \tmmathbf{u} \quad \text{in $L^2 (
  \Stwo^2, \RR^3 )$} \, , \label{eq:FSSPHARM}
\end{equation}
with the Fourier coefficients $\hat{u}^{(i)}$ being given by $\hat{u}^{(i)}
(n, j) \assign (\nobracket \tmmathbf{u}, \tmmathbf{y}_{n, j}^{(i)}
\nobracket)_{L^2 ( \Stwo^2, \RR^3)}$.

 As the minimizers of our problem will be fully characterized in terms of the
  first vector spherical harmonics, it is worth to explicitly write down their explicit expressions.
  By the relation $\tmmathbf{y}_{n,
  j}^{(1)} (\xi) \assign Y_{n, j}(\xi)\tmmathbf{n}(\xi)$ we get, for $n = 0$, that
\begin{equation}
\tmmathbf{y}_{0, 0}^{(1)} (\xi) = \frac{1}{\sqrt{4 \pi}} \tmmathbf{n}(\xi).
\end{equation}
For $n = 1$, we get
\begin{eqnarray}
  \mu^{(1)} \tmmathbf{y}_{1, - 1}^{(1)} (\xi) & = & \sin \theta \cos \varphi
  \, \tmmathbf{n} (\xi), \\
  \mu^{(1)} \tmmathbf{y}_{1, 0}^{(1)} (\xi) & = & \cos \theta \,\tmmathbf{n}
  (\xi), \\
  \mu^{(1)} \tmmathbf{y}_{1, 1}^{(1)} (\xi) & = & \sin \theta \sin \varphi
 \, \tmmathbf{n} (\xi), 
\end{eqnarray}
with $\mu^{(1)} \assign \sqrt{4 \pi / 3}$. Also, by the relation
$\tmmathbf{y}_{n, j}^{(2)} (\xi) \assign \frac{1}{\sqrt{n_{\ast}}}
\nabla^{\ast}_{\xi} Y_{n, j} (\xi)$, we obtain, for $n = 1$, the following
identities:
\begin{eqnarray}
  \mu^{(2)} \tmmathbf{y}_{1, - 1}^{(2)} (\xi) & = & \cos \theta \cos \varphi
 \, \tmmathbf{\tau}_{\theta} (\xi) - \sin \varphi \, \tmmathbf{\tau}_{\varphi}
  (\xi), \\
  \mu^{(2)} \tmmathbf{y}_{1, 0}^{(2)} (\xi) & = & - \sin \theta
  \,\tmmathbf{\tau}_{\theta} (\xi), \\
  \mu^{(2)} \tmmathbf{y}_{1, 1}^{(2)} (\xi) & = & \cos \theta \sin \varphi
  \,\tmmathbf{\tau}_{\theta} (\xi) + \cos \varphi\, \tmmathbf{\tau}_{\varphi}
  (\xi), 
\end{eqnarray}
with $\mu^{(2)} \assign \sqrt{8 \pi / 3}$, $\tmmathbf{\tau}_{\theta} (\xi)
\assign (\cos \theta \cos \varphi, \cos \theta \sin \varphi, - \sin \theta)$,
and $\tmmathbf{\tau}_{\varphi} (\xi) \assign (- \sin \varphi, \cos \varphi,
0)$. Note that the tangent vectors $\tmmathbf{\tau}_{\theta}$ and
$\tmmathbf{\tau}_{\varphi}$ have unit norms.
 The previous expressions turn out to be extremely useful to obtain both a qualitative and a quantitative comprehension of the energy landscape as in Figure~\ref{fig:3}.
\begin{remark}
  Throughout the paper, we use summations which formally involve also $\tmmathbf{y}_{0,
  0}^{(2)} =\tmmathbf{y}_{0, 0}^{(3)} = 0$, with the understanding that $\hat{u}^{(2)} (0, 0) =
  \hat{u}^{(3)} (0, 0) = 0$. Indeed, although 
  these vectors are not officially present in the orthonormal system of vector
  spherical harmonics, such a convention
  allows us to express the Fourier series representation of $\tmmathbf{u}$ in
  the compact form $\sum_{i \in \NN_3} \sum_{(n, j) \in \myJ} \hat{u}^{(i)}
  (n, j) \tmmathbf{y}_{n, j}^{(i)} $.

\end{remark}

\section{Representation of the energy in a space of sequences}\label{sec:representationsequences}

In this section we are going to rewrite the energy \eqref{eq:micromagnefs2} in terms of sequences using Fourier representation \eqref{eq:FSSPHARM}.
According to the representation formula {\eqref{eq:FSSPHARM}}, every
vector field $\tmmathbf{u} \in H^1 ( \Stwo^2, \RR^3)$ can be
expressed in the form
\begin{equation}
  \tmmathbf{u}= \sum_{i \in \NN_3} \sum_{(n, j) \in \myJ} \hat{u}_i (n, j)
  \tmmathbf{y}_{n, j}^{(i)} \quad \text{in } L^2 ( \Stwo^2, \RR^3),
\end{equation}
with the Fourier coefficients $\hat{u}^{(i)}$ being given by $\hat{u}^{(i)}
(n, j) \assign (\nobracket \tmmathbf{u}, \tmmathbf{y}_{n, j}^{(i)}
\nobracket)_{L^2 ( \Stwo^2, \RR^3)}$.
Also, if $\tmmathbf{u}$ is a smooth vector field, we have $\|
\grad^{\ast}_{\xi} \tmmathbf{u} \|^2_{L^2 ( \Stwo^2, \RR^3)}
= (- \Delta^{\ast}_{\xi} \tmmathbf{u}, \tmmathbf{u})_{L^2 ( \Stwo^2,
\RR^3 )}$. Hence, by making use of the relations ({\tmabbr{cf.}}
{\cite[p.237]{freeden2008spherical}})
\begin{eqnarray}
  - \Delta^{\ast} \tmmathbf{y}^{(1)}_{n, j} & = & \left( \ns + 2 \right)
  \tmmathbf{y}^{(1)}_{n, j} - 2 \sqrt{\ns} \tmmathbf{y}^{(2)}_{n, j}, \\
  - \Delta^{\ast} \tmmathbf{y}^{(2)}_{n, j} & = & \ns
  \tmmathbf{y}^{(2)}_{n, j} - 2 \sqrt{\ns} \tmmathbf{y}^{(1)}_{n, j}, \\
  - \Delta^{\ast} \tmmathbf{y}^{(3)}_{n, j} & = & \ns
  \tmmathbf{y}^{(3)}_{n, j}, 
\end{eqnarray}
where $\ns \assign n (n + 1)$, we infer that for every $\tmmathbf{u} \in
C^{\infty} \left( \Stwo^2, \RR^3 \right)$
\begin{eqnarray}
  - \Delta^{\ast}_{\xi} \tmmathbf{u} (\xi) & = & \sum_{(n, j) \in \myJ}
  \hat{u}_1 (- \Delta^{\ast}_{\xi} \tmmathbf{y}^{(1)}) + \hat{u}_2 (-
  \Delta^{\ast}_{\xi} \tmmathbf{y}^{(2)}) + \hat{u}_3 (- \Delta^{\ast}_{\xi}
  \tmmathbf{y}^{(3)}) \\
  & = & \sum_{(n, j) \in \myJ} \left( \left( \ns + 2 \right) \hat{u}_1 - 2
  \sqrt{\ns} \hat{u}_2 \right) \tmmathbf{y}^{(1)}_{n, j} + \left( \ns
  \hat{u}_2 - 2 \sqrt{\ns} \hat{u}_1 \right) \tmmathbf{y}^{(2)}_{n, j} + \ns
  \hat{u}_3 \tmmathbf{y}^{(3)}_{n, j} . 
\end{eqnarray}
with the understanding that $\hat{u}_2 (0, 0) = \hat{u}_3 (0, 0) = 0$ and
$\hat{u}_1 = \hat{u}_1 (n, j)$, $\hat{u}_2 = \hat{u}_2 (n, j)$, and $\hat{u}_3
= \hat{u}_3 (n, j)$. Thus, for every $\tmmathbf{u} \in C^{\infty} \left(
\Stwo^2, \RR^3 \right)$,
\begin{equation}
  \int_{\Stwo^2} \left| \grad^{\ast}_{\xi} \tmmathbf{u} (\xi) \right|^2 \mathd
  \xi = \sum_{(n, j) \in \myJ} \left( \ns + 2 \right) \hat{u}_1^2 - 4
  \sqrt{\ns} \hat{u}_1  \hat{u}_2 + \ns \hat{u}_2^2 + \ns \hat{u}_3^2,
\end{equation}
and, by density, the same relation holds for every $\tmmathbf{u} \in H^1
\left( \Stwo^2, \RR^3 \right)$. Also, a straightforward calculation shows that
\begin{equation}
  \int_{\Stwo^2} (\tmmathbf{u} (\xi) \cdot \tmmathbf{n} (\xi))^2 \mathd \xi
  = \sum_{(n, j) \in \myJ} \hat{u}_1^2 (n, j) .
\end{equation}
Therefore, the surface energy {\eqref{eq:micromagnefs2}}, in the
sequence space, reads as the functional
\begin{equation}
  \Gk (\hat{\tmmathbf{u}}) = \sum_{(n, j) \in \myJ} \left( \ns - 2 + \kappa
  \right) \hat{u}_1^2 + \left( 2 \hat{u}_1 - \sqrt{\ns} \hat{u}_2 \right)^2 +
  \ns \hat{u}_3^2 . \label{eq:EnergyinVSH}
\end{equation}
Denoting by $\ltwoJ$ the classical Hilbert space of square-summable sequences
endowed with the inner product $\langle \hat{\tmmathbf{u}}, \hat{\tmmathbf{v}}
\rangle \assign \sum_{(n, j) \in \myJ} \hat{u}_1 \hat{v}_1 + \hat{u}_2
\hat{v}_2 + \hat{u}_3 \hat{v}_3$, the natural domain of $\Gk$ is the subspace
$\vhs$ of $\ltwoJ$ consisting of those sequences in $\hat{\tmmathbf{u}} \in
\ltwoJ$ such that $\sqrt{\ns} \hat{\tmmathbf{u}} \in \ltwoJ$. In $\vhs$ the
constraint {\eqref{eq:relaxedconstraint}} reads as
\begin{equation}
  \langle \hat{\tmmathbf{u}}, \hat{\tmmathbf{u}} \rangle \, = \sum_{(n, j) \in
  \myJ} \hat{u}_1^2 + \hat{u}_2^2 + \hat{u}_3^2 = \int_{\Stwo^2} |
  \tmmathbf{u} (\xi) |^2 \mathd \xi = 4 \pi \, .
  \label{eq:ELu20constraint4p}
\end{equation}
As before, in the previous relations, to shorten notation, we avoided to
explicitly write the dependence of $\hat{u}_1, \hat{u}_2, \hat{u}_3$ from $(j,
n)$.

\section{Proof of the Poincar{\'e} inequality (Theorem \ref{thm:mainthmpoinc})}\label{sec:uppboundmin}

In this section, we are going to prove the main result of this note -- Theorem~\ref{thm:mainthmpoinc}.
Without loss of
generality, we will focus on the case $\kappa \neq 0$, because for $\kappa =
0$ the only minimizers are the constant vector fields with unit modulus. Instead of 
working with the original continuous formulation \eqref{eq:micromagnefs2},
we introduce the equivalent formulation in terms of sequences:
\begin{equation}
  \min_{\hat{\tmmathbf{u}} \in \vhs} \Gk (\hat{\tmmathbf{u}}), \quad
  \text{subject to} \quad \frac{1}{4 \pi} \| \hat{\tmmathbf{u}}
  \|^2_{\mathbf{{\ell}}_2 \left( \myJ \right)} = 1\, , \label{problem:min}
\end{equation}
and provide a complete characterization of the minimizers of {\eqref{problem:min}}.

We split the proof into several steps and firstly prove the following useful lemma.

\begin{lemma}
  \label{lemma:3}For any $\kappa \in \RR$, the following upper bound on the energy \eqref{eq:EnergyinVSH} holds
  \begin{equation} 
    \min \Gk (\hat{\tmmathbf{u}}) \leqslant \min \left\{ 2 \pi \left( (\kappa
    + 6) - \sqrt{\kappa^2 + 4 \kappa + 36} \right), 4 \pi (2 + \kappa)
    \right\} < 8 \pi \label{eq:necconminimality} .
  \end{equation}
  Moreover, if $\hat{\tmmathbf{u}} = (\hat{u}_1, \hat{u}_2, \hat{u}_3) \in
  \vhs$ is a minimizer for $\Gk$ then: 
  \begin{enumerate}[(i)]
  \item[\tmstrong{i{\tmem{)}}}] The coefficients $\hat{u}_3 (n, j) = 0$ for any $(n, j)
  \in \myJ$. 
  \item[\tmstrong{ii{\tmem{)}}}] If $\Gk (\hat{\tmmathbf{u}}) < 4 \pi
  (2 + \kappa)$ then $\hat{u}_1 (0, 0) = 0$. 
  \item[\tmstrong{iii{\tmem{)}}}] The coefficients 
  $\hat{\tmmathbf{u}} (n, j) = 0$ for any $n \geqslant 2$ and all $| j |
  \leqslant n$.  
  \end{enumerate}
  
\end{lemma}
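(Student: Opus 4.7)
The strategy is to combine a direct upper bound on $\min\Gk$ obtained from two explicit test sequences with Euler--Lagrange reasoning that exploits the mode-by-mode structure of \eqref{eq:EnergyinVSH}; the upper bound feeds into the conclusions via the universal identity $\Gk(\hat{\tmmathbf{u}}) = 4\pi\lambda$ that holds at any minimizer, where $\lambda$ is the Lagrange multiplier for the constraint.

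\emph{Upper bound.} I would test $\Gk$ against: (a) the sequence $\hat{u}_1(0,0)=\sqrt{4\pi}$ with all other coefficients zero, which (since $\ns=0$) yields directly $\Gk = 4\pi(2+\kappa)$; and (b) a sequence supported only on the mode $(n,j)=(1,0)$ with $\hat{u}_1(1,0)^2 + \hat{u}_2(1,0)^2 = 4\pi$, reducing the energy to the quadratic form $\kappa\hat{u}_1^2 + (2\hat{u}_1 - \sqrt{2}\hat{u}_2)^2$ whose constrained minimum equals $4\pi$ times the smaller eigenvalue of
\begin{equation*}
M_1 = \begin{pmatrix} \kappa+4 & -2\sqrt{2} \\ -2\sqrt{2} & 2 \end{pmatrix},
\end{equation*}
namely $2\pi\bigl((\kappa+6) - \sqrt{\kappa^2+4\kappa+36}\bigr)$. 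The strict inequality $<8\pi$ then follows from the elementary facts $\sqrt{\kappa^2+4\kappa+36} > \kappa+2$ (by squaring, using $32>0$) and $2+\kappa < 2$ for $\kappa<0$.

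\emph{Lagrange identity.} At a minimizer I would then form $L = \Gk - \lambda(\|\hat{\tmmathbf{u}}\|_{\ltwoJ}^2 - 4\pi)$. Stationarity decouples over $(n,j)\in\myJ$: for each mode, the $\hat{u}_3$-condition is the scalar equation $(\ns-\lambda)\hat{u}_3(n,j)=0$, and the $(\hat{u}_1,\hat{u}_2)$-conditions form the $2\times 2$ eigenvalue problem
\begin{equation*}
M_n \begin{pmatrix} \hat{u}_1(n,j) \\ \hat{u}_2(n,j) \end{pmatrix} = \lambda \begin{pmatrix} \hat{u}_1(n,j) \\ \hat{u}_2(n,j) \end{pmatrix}, \qquad M_n \assign \begin{pmatrix} \ns+\kappa+2 & -2\sqrt{\ns} \\ -2\sqrt{\ns} & \ns \end{pmatrix}.
\end{equation*}
Taking the inner product of these equations with $(\hat{u}_1,\hat{u}_2,\hat{u}_3)$ and summing over $(n,j)$ reproduces the full energy $\Gk(\hat{\tmmathbf{u}})$ on the left and $\lambda\,\|\hat{\tmmathbf{u}}\|_{\ltwoJ}^2 = 4\pi\lambda$ on the right, so any minimizer satisfies $\lambda = \min\Gk/(4\pi) < 2$.

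\emph{Conclusions and main obstacle.} Part (i) is immediate: for every $n\geqslant 1$ one has $\ns \geqslant 2 > \lambda$, forcing $\hat{u}_3(n,j)=0$, while $\hat{u}_3(0,0) = 0$ by convention. Part (ii) uses the $(0,0)$-mode equation, which (with $\ns=0$) reduces to $(\kappa+2-\lambda)\hat{u}_1(0,0)=0$; the hypothesis $\Gk(\hat{\tmmathbf{u}}) < 4\pi(2+\kappa)$ gives $\lambda < \kappa+2$, hence $\hat{u}_1(0,0)=0$. For (iii), any nonzero $(\hat{u}_1(n,j),\hat{u}_2(n,j))$ requires $\lambda$ to be an eigenvalue of $M_n$, so $\lambda \geqslant \mu_-(\ns) \assign \tfrac{1}{2}\bigl(2\ns + \kappa+2 - \sqrt{(\kappa+2)^2 + 16\ns}\bigr)$; a derivative check shows $\mu_-$ is strictly increasing in $\ns$ on $\ns\geqslant 2$, so it suffices to prove $\mu_-(6)>\lambda$ at $n=2$. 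In the subcase $\kappa>-4$ (where $\lambda \leqslant \gamma(\kappa)$) the inequality reduces to $\sqrt{(\kappa+2)^2+96} < 8 + \sqrt{(\kappa+2)^2+32}$, and in the subcase $\kappa\leqslant -4$ (where $\lambda \leqslant \kappa+2$) it reduces to $(10-\kappa)^2 > (\kappa+2)^2 + 96$; both are immediate by squaring. The main obstacle is this bookkeeping: aligning the case split $\kappa>-4$ vs.\ $\kappa\leqslant -4$ with the correct upper bound on $\lambda$ and the correct branch of the $\mu_-(6)>\lambda$ comparison; monotonicity of $\mu_-$ in $\ns$ then propagates the estimate to every $n\geqslant 2$.
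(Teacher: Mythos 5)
Your proof is correct, and for the crucial part \textbf{(iii)} it takes a genuinely different route from the paper. The upper bound and parts (i)--(ii) are essentially the paper's argument (the paper pairs $\hat{u}_1(0,0)$ with $\hat{u}_1(1,1)$ to get the $4\pi(\kappa+2)$ bound, but your single-mode test is an equally valid simplification). The real divergence is in (iii). The paper proves it by a sign argument: it first shows that $\operatorname{sign}(\hat{u}_1(n,j))=\operatorname{sign}(\hat{u}_2(n,j))$ at a minimizer (else the ratio form of the Euler--Lagrange equations forces $\Gk>8\pi$), and then writes the two Euler--Lagrange equations as expressions for $\Gk$ in terms of the ratios $|\hat{u}_2|/|\hat{u}_1|$ and $|\hat{u}_1|/|\hat{u}_2|$; feeding in $\Gk\leqslant 4\pi(\kappa+2)$ and $\Gk<8\pi$ yields the chain $(\ns-2)|\hat{u}_2|<2\sqrt{\ns}\,|\hat{u}_1|\leqslant 4|\hat{u}_2|$, which fails once $\ns\geqslant 6$. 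You instead read the Euler--Lagrange system mode-by-mode as the $2\times 2$ eigenvalue problem $M_n v=\lambda v$ with
\begin{equation*}
M_n=\begin{pmatrix}\ns+\kappa+2 & -2\sqrt{\ns}\\ -2\sqrt{\ns} & \ns\end{pmatrix},\qquad
\mu_-(\ns)=\tfrac12\bigl(2\ns+\kappa+2-\sqrt{(\kappa+2)^2+16\ns}\bigr),
\end{equation*}
and kill all modes with $n\geqslant 2$ by showing $\mu_-$ is increasing in $\ns$ and $\mu_-(6)>\lambda$. This is a cleaner, more structural argument: it avoids dividing by $\hat{u}_1$ or $\hat{u}_2$, avoids the preliminary sign lemma entirely, and isolates exactly why the comparison at $\ns=6$ is the decisive one. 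One small simplification you could make: since $\lambda\leqslant\gamma_+(\kappa)$ holds for \emph{every} $\kappa$ (it comes from test (b) alone) and you already check $\mu_-(6)>\gamma_+(\kappa)$ holds unconditionally, the case split $\kappa>-4$ versus $\kappa\leqslant -4$ in your argument for (iii) is unnecessary --- the inequality $8+\sqrt{(\kappa+2)^2+32}>\sqrt{(\kappa+2)^2+96}$ alone suffices. Likewise in the upper bound the remark ``$2+\kappa<2$ for $\kappa<0$'' is not needed, since the first term in the min is always $<8\pi$.
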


\begin{proof}
  We provide a simple test function $\hat{\tmmathbf{u}}_* (n, j)$ by setting all its terms to $0$ except
  $\hat{u}_1
  (1, 1)$ and $\hat{u}_2 (1, 1)$. Therefore the minimum value of $\Gk$ is 
  less than the minimum of $\alpha_{\kappa} (x, y) = (\kappa + 4) x^2 - 4
  \sqrt{2} x_{} y + 2 y^2$ under constraint $x^2+y^2 = 4\pi$. By studying the minima
  of $(\alpha_{\kappa} \circ \gamma) (t)$ with $\gamma (t) = \sqrt{4 \pi} 
  (\cos t, \sin t)$, it is easily seen that
  \begin{equation}
    \min_{(x,y) \in \sqrt{4 \pi}  \Stwo^1} \alpha_{\kappa} (x,y) = 2 \pi
    \left( (\kappa + 6) - \sqrt{\kappa^2 + 4 \kappa + 36} \right) .
  \end{equation}
  Note that, $\kappa^2 + 4 \kappa + 36 > 0$ for every $\kappa \in \RR$ and
  moreover $2 \pi \left( (\kappa + 6) - \sqrt{\kappa^2 + 4 \kappa + 36}
  \right) < 8 \pi$ for every $\kappa \in \RR$, therefore
  \begin{equation}
    \min \Gk (\hat{\tmmathbf{u}}) \leqslant 2 \pi \left( (\kappa + 6) -
    \sqrt{\kappa^2 + 4 \kappa + 36} \right) < 8 \pi \quad \forall \kappa \in
    \RR .
  \end{equation}
  Next, we provide another test function $\hat{\tmmathbf{u}}^* (n, j)$ by setting all its terms to $0$ 
except $\hat{u}_1 (0, 0)$ and 
  $\hat{u}_1 (1, 1)$. Therefore the minimum of $\Gk$ is  less than the
  minimum of $\beta_{\kappa} (x, y) = (\kappa + 2) x^2 + (\kappa + 4) y^2$ on
  $\sqrt{4 \pi}  \Stwo^1$. By studying the minima of $(\beta_{\kappa} \circ
  \gamma) (t)$ with $\gamma (t) = \sqrt{4 \pi}  (\cos t, \sin t)$, it is
  easily seen that
  \begin{equation}
    \min_{\sigma \in \sqrt{4 \pi}  \Stwo^1} \beta_{\kappa} (\sigma) = 4 \pi (2
    + \kappa) .
  \end{equation}
  Therefore, for every $\kappa \in \RR$, relation
  {\eqref{eq:necconminimality}} holds.
  
  \
  
  {\noindent}{\tmstrong{{\tmem{i}})}} We compute the first variation of
  $\Gk$ around the generic point $\hat{\tmmathbf{u}} \in \vhs$ to obtain the following 
  Euler-Lagrange equations
  \begin{equation}
    \sum_{(j, n) \in \myJ} \left( \ns + 2 + \kappa \right) \hat{u}_1 \hat{v}_1
    - 2 \sqrt{\ns} (\hat{u}_1 \hat{v}_2 + \hat{v}_1 \hat{u}_2) + \ns
    (\hat{u}_2 \hat{v}_2 + \hat{u}_3 \hat{v}_3) = \lambda
    (\hat{\tmmathbf{u}}) \cdot \langle \hat{\tmmathbf{u}}, \hat{\tmmathbf{v}}
    \rangle,
  \end{equation}
  with $\lambda (\hat{\tmmathbf{u}}) \in \RR$ the Lagrange multiplier coming
  from the constraint {\eqref{eq:ELu20constraint4p}}. Plugging
  $\hat{\tmmathbf{v}} \assign \hat{\tmmathbf{u}}$ and taking into account
  {\eqref{eq:ELu20constraint4p}}, we obtain $\lambda (\hat{\tmmathbf{u}}) =
  \frac{1}{4 \pi} \Gk (\hat{\tmmathbf{u}})$. Thus, the Euler Lagrange equation
  reads as
  \begin{equation}
    \frac{1}{4 \pi} \Gk (\hat{\tmmathbf{u}}) \langle \hat{\tmmathbf{u}},
    \hat{\tmmathbf{v}} \rangle = \sum_{(j, n) \in \myJ} \left( \ns + 2 +
    \kappa \right) \hat{u}_1 \hat{v}_1 - 2 \sqrt{\ns} (\hat{u}_1 \hat{v}_2 +
    \hat{v}_1 \hat{u}_2) + \ns (\hat{u}_2 \hat{v}_2 + \hat{u}_3 \hat{v}_3) .
    \label{eq:ELOriginal}
  \end{equation}
  for every $\hat{\tmmathbf{v}} \in \vhs$.
  
  We test {\eqref{eq:ELOriginal}} against the sequence $\hat{\tmmathbf{v}}
  \assign (\hat{v}_1, \hat{v}_2, \hat{v}_3)$ with $\hat{v}_1 = \hat{v}_2 = 0$
  and $\hat{v}_3 = \hat{e}_{(n, j)}$, with $\hat{e}_{n, j}$ denoting the
  sequence $(n', j') \in \myJ \mapsto \hat{e}_{n, j} (n', j') \in \RR$ such
  that $\hat{e}_{n, j} (n, j) = 1$ and $\hat{e}_{n, j} (n', j') = 0$ if $(n',
  j') \neq (n, j)$. We get that
  \begin{equation}
    \frac{1}{4 \pi} \Gk (\hat{\tmmathbf{u}}) \hat{u}_3 (n, j) = \ns
    \hat{u}_3 (n, j)  \label{eq:testel1}
  \end{equation}
  for any $n \geqslant 1$ and any $| j | \leqslant n$. Thus, for $n \geqslant
  1$ we have $\Gk (\hat{\tmmathbf{u}})  = 4 \pi \ns \geqslant 8 \pi$
  whenever $\hat{u}_3 (n, j) \neq 0$. Since the minimum of energy is strictly less then $8 \pi$ we necessarily have $\hat{u}_3 (n, j) = 0$ for any $n
  \geqslant 1$. This proves the assertion.
  
  \
  
  {\noindent}{\tmstrong{{\tmem{ii}})}} We now evaluate
  {\eqref{eq:ELOriginal}} on $\hat{\tmmathbf{v}} \assign (\hat{v}_1,
  \hat{v}_2, \hat{v}_3)$, first with $\hat{v}_2 = \hat{v}_3 = 0$ and
  $\hat{v}_1 = \hat{e}_{(n, j)}$, then on $\hat{v}_2 = \hat{e}_{(n,
  j)}$, $\hat{v}_3 = 0$ and $\hat{v}_1 = 0$. We get the following two
  relations
  \begin{eqnarray}
    \frac{1}{4 \pi} \Gk (\hat{\tmmathbf{u}}) \hat{u}_1 (n, j) & = & \left(
    \ns + 2 + \kappa \right) \hat{u}_1 (n, j) - 2 \sqrt{\ns} \hat{u}_2 (n, j) 
    \label{eq:testel2}\\
    \frac{1}{4 \pi} \Gk (\hat{\tmmathbf{u}}) \hat{u}_2 (n, j) & = & - 2
    \sqrt{\ns} \hat{u}_1 (n, j) + \ns \hat{u}_2 (n, j)  \label{eq:testel3}
  \end{eqnarray}
  For $n = 0$, relation {\eqref{eq:testel2}} gives $\Gk (\hat{\tmmathbf{u}})
  \hat{u}_1 (0, 0) = 4 \pi (2 + \kappa) \hat{u}_1 (0, 0)$ so that if
  $\hat{\tmmathbf{u}}$ is a minimizer and $\hat{u}_1 (0, 0) \neq 0$, the
  minimum energy agrees with the limiting value $4 \pi (2 + \kappa)$. 
  Therefore, if the
  minimal energy is strictly less than $4 \pi (2 + \kappa)$, then necessarily
  $\hat{u}_1 (0, 0) = 0$. This proves the statement.
  
  \
  
  {\noindent}{\tmstrong{{\tmem{iii}})}} If
  $\hat{\tmmathbf{u}}$ is a minimizer of $\Gk$ then for $n \geqslant 1$, using {\eqref{eq:testel2}}
  and {\eqref{eq:testel3}}, we have that $\hat{u}_1(n, j) = 0$ if and only if  $\hat{u}_2(n, j) = 0$.
  Equivalently, for any $n \geqslant 1$, $\hat{u}_1 (n, j) \hat{u}_2 (n, j) =
  0$ implies $\hat{u}_1 (n, j) = 0$ and $\hat{u}_2 (n, j) = 0$.
  
  We now focus on the indices $n \geqslant 1$ and, using above observation, rewrite 
  relations {\eqref{eq:testel2}} and {\eqref{eq:testel3}} into the form
  \begin{eqnarray}
    \frac{1}{4 \pi} \Gk (\hat{\tmmathbf{u}}) \hat{u}_1 (n, j) \hat{u}_2 (n, j)
    & = & \left( \ns + 2 + \kappa \right) \hat{u}_1 (n, j) \hat{u}_2 (n, j)
    - 2 \sqrt{\ns} \hat{u}_2^2 (n, j)  \label{eq:testel2new}\\
    \frac{1}{4 \pi} \Gk (\hat{\tmmathbf{u}}) \hat{u}_2 (n, j) \hat{u}_1 (n, j)
    & = & - 2 \sqrt{\ns} \hat{u}_1^2 (n, j) + \ns \hat{u}_2 (n, j)
    \hat{u}_1 (n, j) .  \label{eq:testel3new}
  \end{eqnarray}
  If for some $n \geqslant 1$ the product $\hat{u}_1 (n, j) \hat{u}_2 (n, j)$
  is negative then from {\eqref{eq:testel2new}} and {\eqref{eq:testel3new}} we
  get
  \begin{eqnarray}
    \Gk (\hat{\tmmathbf{u}}) & = & 4 \pi \left[ \left( \ns + 2 + \kappa
    \right) - 2 \sqrt{\ns} \frac{\hat{u}_2^2 (n, j)}{\hat{u}_1 (n, j)
    \hat{u}_2 (n, j)} \right] > 4 \pi (\kappa + 2) \\
    \Gk (\hat{\tmmathbf{u}}) & = & 4 \pi \left[ \ns - 2 \sqrt{\ns}
    \frac{\hat{u}_1^2 (n, j)}{\hat{u}_1 (n, j) \hat{u}_2 (n, j)} \right] > 8
    \pi 
  \end{eqnarray}
  and $\hat{\tmmathbf{u}}$ is not a minimizer as a consequence of
  {\eqref{eq:necconminimality}}. Thus, if $\hat{\tmmathbf{u}}$ is a minimizer
  of $\Gk$ then
  \begin{equation}
    \tmop{sign} (\hat{u}_1 (n, j)) = \tmop{sign} (\hat{u}_2 (n, j)) \quad
    \text{for any } n \geqslant 1. \label{eq:exlemma:5}
  \end{equation}
  Hence, from {\eqref{eq:testel2}} and {\eqref{eq:testel3}} we infer
  \begin{eqnarray}
    \Gk (\hat{\tmmathbf{u}}) & = & 4 \pi \left[ \left( \ns + 2 + \kappa
    \right) - 2 \sqrt{\ns} \frac{| \hat{u}_2 (n, j) |}{| \hat{u}_1 (n, j) |}
    \right],  \label{eq:testel2sign}\\
    \Gk (\hat{\tmmathbf{u}}) & = & 4 \pi \left[ \ns - 2 \sqrt{\ns} \frac{|
    \hat{u}_1 (n, j) |}{| \hat{u}_2 (n, j) |} \right] . 
    \label{eq:testel3sign}
  \end{eqnarray}
  Imposing the condition $\Gk (\hat{\tmmathbf{u}}) \leqslant 4 \pi (\kappa +
  2)$ in {\eqref{eq:testel2sign}} and the condition $\Gk (\hat{\tmmathbf{u}})
  < 8 \pi$ in {\eqref{eq:testel3sign}} we get that if $\hat{\tmmathbf{u}}$ is
  a minimizer then necessarily $\left( \ns - 2 \right) | \hat{u}_2 (n, j) | <
  4 | \hat{u}_2 (n, j) |$, but this cannot be the case for $n \geqslant 2$.
  Therefore, necessarily $\hat{u}_1 (n, j) = \hat{u}_2 (n, j) = 0$ for any $n
  \geqslant 2$. This concludes the proof.
\end{proof}

Combining the results stated in Lemma \ref{lemma:3}, we can reduce the
infinite dimensional minimization problem for $\Gk$ to a finite dimensional
one. Precisely, we have the following proposition.

\begin{proposition}
  \label{thm:main}The minimization problem for $\Gk$, subject to the
  constraint {\tmem{{\eqref{eq:ELu20constraint4p}}}}, reduces to the
  minimization, in the variables $\sigma \assign (\hat{u}_1 (0, 0), \hat{u}_1 (1, j),
  \hat{u}_2 (1, j))_{| j | \leqslant 1}$, of the constrained function $g_{\kappa} : \sqrt{4 \pi} \Stwo^6 \rightarrow
  \RR^+$ given by
  \begin{equation}
    g_{\kappa} (\sigma) = (\kappa + 2) \hat{u}_1^2 (0, 0) + \sum_{j = - 1}^1
    \kappa \hat{u}_1^2 (1, j) + \left( 2 \hat{u}_1 (1, j) - \sqrt{2} \hat{u}_2
    (1, j) \right)^2 . \label{finitedimproblemS6}
  \end{equation}
  Precisely, any minimizer $\hat{\tmmathbf{u}}_{\star} = (\hat{u}_1 (n, j),
  \hat{u}_2 (n, j), \hat{u}_3 (n, j))_{(n, j) \in \myJ}$ of $\Gk$ has all the terms zero except for those presented in
  $\sigma$, and coming fom minimizing $g_{\kappa}$.  
 Specifically, the following complete characterization of the energy landscape holds:
  \begin{itemize}
\item  If $\kappa < - 4$, the minimum value of the energy is given by
$\mathcal{G}_{\kappa} (\hat{\tmmathbf{u}}_{\star}) = 4 \pi (\kappa + 2)$ and,
in this case, $\hat{u}_1 (0, 0)$ is the only non-zero variable. Therefore,
necessarily $\hat{u}_1 (0, 0) = \pm \sqrt{4 \pi}$.
    
    \item If $\kappa > - 4$ the minimum value of the energy is given by $\Gk
(\hat{\tmmathbf{u}}_{\star}) = 4 \pi \gamma_+(\kappa)$ with  $\gamma_+(\kappa):=\frac{1}{2} \left( (\kappa + 6) - \sqrt{\kappa^2 + 4
    \kappa + 36} \right)$. In this case, necessarily $\hat{u}_1(0, 0) = 0$ and
\begin{equation} \label{eq:u1andu2rel}
\hat{u}_2 (1, j) = \frac{- 2 \sqrt{2}}{(\gamma_+(\kappa) - 2)} \hat{u}_1 (1,j) \quad \forall | j | \leqslant 1.
\end{equation}
The minimum
value is reached on any vector $\hat{\sigma} = (\hat{u}_1 (1, j))_{| j |
\leqslant 1}$ such that
\begin{equation}
  | \hat{\sigma} |^2 = 2 \pi \frac{- (\kappa + 2) + \sqrt{\kappa^2 + 4 \kappa
  + 36}}{\sqrt{\kappa^2 + 4 \kappa + 36}} . \label{eq:configinfinite}
\end{equation}
    \item If $\kappa = - 4$, the minimum value of the energy is given by
$\mathcal{G}_{\kappa} (\hat{\tmmathbf{u}}_{\star}) = - 8
\pi$ and it is reached on any vector $\sigma$ such that \eqref{eq:u1andu2rel} holds and $2 \hat{u}_1^2 (0, 0) + 3 | \hat{\sigma} |^2 = 8
\pi$.
  \end{itemize}
\end{proposition}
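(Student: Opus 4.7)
\textbf{Proof plan for Proposition~\ref{thm:main}.} The plan is to parlay Lemma~\ref{lemma:3} into an explicit seven-dimensional reduction and then solve the resulting problem by spectral analysis of a $2\times 2$ quadratic form. First I would invoke parts i) and iii) of Lemma~\ref{lemma:3}: they force $\hat{u}_3\equiv 0$ and $\hat{u}_1(n,j)=\hat{u}_2(n,j)=0$ for every $n\geqslant 2$ at any minimizer. Hence the only possibly non-zero Fourier coefficients are the seven scalars in $\sigma=(\hat u_1(0,0),\hat u_1(1,j),\hat u_2(1,j))_{|j|\leqslant 1}$, and substituting $n_\ast=0$ for $n=0$ and $n_\ast=2$ for $n=1$ into \eqref{eq:EnergyinVSH} produces exactly $g_\kappa(\sigma)$ in \eqref{finitedimproblemS6}; the constraint \eqref{eq:ELu20constraint4p} becomes $|\sigma|^2=4\pi$, i.e.\ $\sigma\in\sqrt{4\pi}\,\Stwo^6$.

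Next, I would exploit the key structural observation that $g_\kappa$ decouples into a scalar $n=0$ contribution $(\kappa+2)\hat u_1^2(0,0)$ and three independent copies (one per $j\in\{-1,0,1\}$) of the planar quadratic form
\[
Q_\kappa(x,y)\assign \kappa x^2+(2x-\sqrt 2 y)^2=(\kappa+4)x^2-4\sqrt 2\,xy+2y^2.
\]
The eigenvalues of $Q_\kappa$ are $\frac12\bigl((\kappa+6)\pm\sqrt{\kappa^2+4\kappa+36}\bigr)$, so its minimum over the unit circle equals $\gamma_+(\kappa)$, attained on the eigenline where $-2\sqrt 2\,x=(\gamma_+(\kappa)-2)y$, i.e.\ $y=\tfrac{-2\sqrt 2}{\gamma_+(\kappa)-2}x$. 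Consequently, writing $t\assign \hat u_1^2(0,0)\in[0,4\pi]$ and using $\sum_j(\hat u_1^2(1,j)+\hat u_2^2(1,j))=4\pi-t$, one gets the sharp lower bound
\[
g_\kappa(\sigma)\geqslant (\kappa+2)\,t+\gamma_+(\kappa)(4\pi-t)=4\pi\gamma_+(\kappa)+\bigl(\kappa+2-\gamma_+(\kappa)\bigr)\,t,
\]
with equality iff each pair $(\hat u_1(1,j),\hat u_2(1,j))$ lies on the minimum-eigenvalue eigenline of $Q_\kappa$.

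The final step is a sign inspection of $\kappa+2-\gamma_+(\kappa)$. A direct manipulation of $(2-\kappa)^2$ versus $\kappa^2+4\kappa+36$ shows that this quantity is positive for $\kappa>-4$, negative for $\kappa<-4$, and zero at $\kappa=-4$. Minimising the affine function of $t\in[0,4\pi]$ then yields the three cases: for $\kappa<-4$, take $t=4\pi$ giving $\hat u_1(0,0)=\pm\sqrt{4\pi}$ and minimum value $4\pi(\kappa+2)$; for $\kappa>-4$, take $t=0$, which combined with the eigenline condition gives \eqref{eq:u1andu2rel}, and the norm condition $(1+c^2)|\hat\sigma|^2=4\pi$ with $c=\tfrac{-2\sqrt 2}{\gamma_+(\kappa)-2}$ yields \eqref{eq:configinfinite} after rationalising $1+c^2$ using $(\kappa+2)^2-(\kappa^2+4\kappa+36)=-32$; for $\kappa=-4$ any split $t\in[0,4\pi]$ is admissible, and plugging $c=\sqrt 2/2$ into the constraint $t+\tfrac32\sum_j\hat u_1^2(1,j)=4\pi$ gives $2\hat u_1^2(0,0)+3|\hat\sigma|^2=8\pi$.

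The main obstacle is not conceptual but algebraic: keeping the identities $\gamma_+(\kappa)-2=\tfrac12(\kappa+2-\sqrt{\kappa^2+4\kappa+36})$ and $(\kappa+2)^2-(\kappa^2+4\kappa+36)=-32$ in view while rationalising denominators, so that the eigenline slope and the norm formula come out in the clean form displayed in the statement. Everything else reduces to the spectral analysis of a single $2\times 2$ matrix and a one-variable comparison between two real numbers.
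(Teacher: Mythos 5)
Your proof is correct, and it takes a genuinely different route from the paper's. The paper derives the Euler--Lagrange equations for the reduced problem via a Lagrange multiplier, tests them against the canonical sequences $\hat e_{0,0}$ and $\hat e_{1,j}$ to obtain three scalar relations, and then eliminates variables to arrive at the quadratic $(G-2)(G-(4+\kappa))=8$ for the normalized energy $G=\frac{1}{4\pi}\Gk(\hat{\tmmathbf{u}})$. You instead observe that the reduced functional $g_\kappa$ decouples into the scalar $n=0$ contribution plus three independent copies of the $2\times2$ quadratic form $Q_\kappa$, diagonalize $Q_\kappa$ directly (its characteristic polynomial $\lambda^2-(\kappa+6)\lambda+2\kappa=0$ is exactly the same equation the paper reaches through the EL route), and then optimize a one-variable affine function in $t=\hat u_1^2(0,0)$, with the case split governed by the sign of $\kappa+2-\gamma_+(\kappa)$. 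The two arguments are therefore algebraically equivalent at their core, but yours is more self-contained: it replaces the Lagrange-multiplier/testing bookkeeping with an elementary spectral bound, which makes both the optimality condition (eigenline of $Q_\kappa$) and the trichotomy in $\kappa$ immediately visible. The paper's EL approach, on the other hand, yields the stationarity relations \eqref{eq:eq1}--\eqref{eq:eq3} explicitly, which carry independent structural information about critical points beyond the minimizers. All of your intermediate identities check out, including $\kappa+2-\gamma_+(\kappa)=\tfrac12\bigl((\kappa-2)+\sqrt{\kappa^2+4\kappa+36}\bigr)$ whose sign is controlled by comparing $(2-\kappa)^2$ with $\kappa^2+4\kappa+36$, i.e.\ by the sign of $8(\kappa+4)$, and the rationalization $1+c^2=2D/(D-a)$ with $D=\sqrt{\kappa^2+4\kappa+36}$, $a=\kappa+2$, which indeed produces \eqref{eq:configinfinite}.
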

\begin{remark} \label{rmk:samermkofPoinc}
  The limiting value $\kappa = - 4$ represents a special case in which different topological states may coexist. Indeed, for $|
  \hat{\sigma} | = 0$ we recover the solutions $\hat{u}_1 (0, 0) \assign \pm
  \sqrt{4 \pi}$ formally arising as the limit for $\kappa \rightarrow - 4^-$
  of the family of minimization problems for $g_{\kappa}$. Similarly, for $\hat{u}_1 (0, 0) = 0$, we recover the minimal solutions arising as the limit for $\kappa \rightarrow - 4^+$ of the family of minimization problems for $g_{\kappa}$.
\end{remark}

\begin{proof}
  According to Lemma \ref{lemma:3}, the Euler-Lagrange equations
  {\eqref{eq:ELOriginal}}, can be simplified to read, for every $\hat{\tmmathbf{v}} \in \vhs$, as
  \begin{equation}
    \frac{1}{4 \pi} \Gk (\hat{\tmmathbf{u}}) \langle \hat{\tmmathbf{u}},
    \hat{\tmmathbf{v}} \rangle = \sum_{(n, j) \in \myJ_1} \left( \ns + 2 +
    \kappa \right) \hat{u}_1 \hat{v}_1 - 2 \sqrt{\ns} (\hat{u}_1 \hat{v}_2 +
    \hat{v}_1 \hat{u}_2) + \ns (\hat{u}_2 \hat{v}_2 + \hat{u}_3 \hat{v}_3).
  \end{equation}
Taking, in the order,
  $\hat{\tmmathbf{v}} = (\hat{e}_{0, 0}, 0, 0)$, $\hat{\tmmathbf{v}} =
  (\hat{e}_{1, j}, 0, 0)$, $\hat{\tmmathbf{v}} = (0, \hat{e}_{1, j}, 0)$, we
  get that if $\hat{\tmmathbf{u}}$ is a minimizer, then
  \begin{eqnarray}
    \frac{1}{4 \pi} \Gk (\hat{\tmmathbf{u}}) \hat{u}_1 (0, 0) & = & (2 +
    \kappa) \hat{u}_1 (0, 0),  \label{eq:eq1}\\
    \frac{1}{4 \pi} \Gk (\hat{\tmmathbf{u}}) \hat{u}_1 (1, j) & = & (4 +
    \kappa) \hat{u}_1 (1, j) - 2 \sqrt{2} \hat{u}_2 (1, j),  \label{eq:eq2}\\
    \frac{1}{4 \pi} \Gk (\hat{\tmmathbf{u}}) \hat{u}_2 (1, j) & = & - 2
    \sqrt{2} \hat{u}_1 (1, j) + 2 \hat{u}_2 (1, j) .  \label{eq:eq3}
  \end{eqnarray}
  From equation {\eqref{eq:eq1}} and Lemma~\ref{lemma:3} we immediately obtain that $\hat{u}_1 (0, 0)
  \neq 0$ if, and only if, $\Gk (\hat{\tmmathbf{u}}) = 4 \pi (2 + \kappa) .$
  On the other hand, from {\eqref{eq:eq3}}, setting $G_{\kappa} \assign
  \frac{1}{4 \pi} \Gk (\hat{\tmmathbf{u}})$ and noting that $G_{\kappa} <
  2$, we obtain
  \begin{equation}
    \hat{u}_2 (1, j) = \frac{- 2 \sqrt{2}}{(G_{\kappa} - 2)} \hat{u}_1 (1,
    j) . \label{eq:tempu1equalu2}
  \end{equation}
Substituting this last expression into {\eqref{eq:eq2}} we obtain
  $(G_{\kappa} - 2) (G_{\kappa} - (4 + \kappa)) \hat{u}_1 (1, j) = 8
  \hat{u}_1 (1, j)$, and this, together with {\eqref{eq:tempu1equalu2}},
  implies that if $\hat{u}_1 (1, j) \neq 0$ for some $| j | \leqslant 1$, then
  $\hat{u}_2 (1, j)$ is different from zero too, and $(G - (4 + \kappa)) (G -
  2) = 8$, that is
  \begin{equation}
    \Gk (\hat{\tmmathbf{u}}) =4\pi \gamma_+(\kappa), \quad  \gamma_+(\kappa):=\frac{1}{2} \left( (\kappa + 6) - \sqrt{\kappa^2 + 4
    \kappa + 36} \right).
  \end{equation}
We have proved the following implication:
\[ \left(\exists | j | \leqslant 1 \quad  \hat{u}_1 (1, j) \neq 0\quad \text{{\tmstrong{or}}} \quad \hat{u}_2 (1, j) \neq 0 \right)
  \quad \Longrightarrow \quad  \mathcal{G}_{\kappa} (\hat{\tmmathbf{u}}) = 4\pi \gamma_+(\kappa) . \]
Therefore, if $\mathcal{G}_{\kappa} (\hat{\tmmathbf{u}}) \neq 4\pi \gamma_+(\kappa)$ then necessarily
\[  \hat{u}_1 (1, j) = \hat{u}_2 (1, j) = 0\quad \forall | j | \leqslant 1 . \]
Since $\gamma_+(\kappa) \leqslant  (\kappa +
2)$ if, and only if, $\kappa \geqslant - 4$, by \eqref{eq:necconminimality} we infer that for $\kappa < -
4$ we have $\mathcal{G}_{\kappa} (\hat{\tmmathbf{u}}) < 4 \pi \gamma_+(\kappa)$ and $\hat{u}_1 (1, j)
= \hat{u}_2 (1, j) = 0$ $\forall | j | \leqslant 1$. Since the variables in
$\sigma$ 
must be in $\sqrt{4 \pi} \Stwo^6$ this means that
$\hat{u}_1 (0, 0)$ is the only variable different from zero, and therefore necessarily equal to $\pm \sqrt{4 \pi}$.

On the other hand, from equation {\eqref{eq:eq1}} we immediately obtain that \tmcolor{red}{if}
$\hat{u}_1 (0, 0) \neq 0$ then $\mathcal{G}_{\kappa} (\hat{\tmmathbf{u}}) = 4
\pi (2 + \kappa)$, which, in turn, implies $\kappa \leqslant - 4$. 
Therefore, if $\kappa > - 4$ then necessarily $\hat{u}_1 (0, 0) = 0$ and, due
to the constraint, at least one of the $\hat{u}_1 (1, j)$ is different from
zero. Thus, $G_{\kappa} \assign \frac{1}{4 \pi} \mathcal{G}_{\kappa}
(\hat{\tmmathbf{u}}) = \gamma_+ (\kappa)$. This observation, in combination
with {\eqref{eq:tempu1equalu2}}, implies that for $\kappa > - 4$ the problem
trivialize to the minimization of
\begin{equation}
  g_{\kappa} (\hat{\sigma}) = \left( \frac{\kappa (\gamma_+ (\kappa) - 2)^2 +
  4 \gamma^2_+ (\kappa)}{(\gamma_+ (\kappa) - 2)^2} \right) | \hat{\sigma}
  |^2, \quad \hat{\sigma} \assign (\hat{u}_1 (1, j))_{| j | \leqslant 1},
\end{equation}
subject to the constraint $| \hat{\sigma} |^2 
= 4 \pi (\gamma_+ (\kappa)
- 2)^2 / ((\gamma_+ (\kappa) - 2)^2 + 8)$. This leads to the already computed
minimal value $g_{\kappa} (\hat{\sigma}) = \gamma_+ (\kappa)$ reached on any
vector $\hat{\sigma} = (\hat{u}_1 (1, j))_{| j | \leqslant 1}$ such that
{\eqref{eq:configinfinite}} holds.

Finally, for $\kappa = - 4$, we have $\gamma_+ (- 4) = - 2$, and again by
{\eqref{eq:tempu1equalu2}}, the problem trivialize to the minimization of
\begin{equation}
  g_{\kappa} (\sigma) = - 2 \hat{u}_1^2 (0, 0) - 3 | \hat{\sigma} |^2, \quad
  \sigma \assign (\hat{u}_1 (0, 0), \hat{\sigma}),
\end{equation}
subject to the constraint $2 \hat{u}_1^2 (0, 0) + 3 | \hat{\sigma} |^2 = 8
\pi$. This leads to the minimal value $g_{\kappa} (\sigma) = - 8 \pi$ reached
on any vector $\sigma \assign (\hat{u}_1 (0, 0), \hat{\sigma})$ such that $2
\hat{u}_1^2 (0, 0) + 3 | \hat{\sigma} |^2 = 8 \pi$.
\end{proof}

\noindent{\bf Finalizing the proof of Theorem~\ref{thm:mainthmpoinc}.} Going back
to the minimization problem {\eqref{eq:micromagnefs2}}, \eqref{eq:relaxedconstraint} for the energy
functional $\Fk$, the results of Proposition~\ref{thm:main} immediately
translate into the context of Theorem~\ref{thm:mainthmpoinc} via the Fourier
isomorphism that maps $\Fk$ into $\Gk$. It is therefore sufficient to apply
the results to $\Fk (\tilde{\tmmathbf{u}})$ with $\tilde{\tmmathbf{u}} \assign
\sqrt{4 \pi} \tmmathbf{u}/ \| \tmmathbf{u} \|_{L^2 \left( \Stwo^2, \RR^3
\right)}$.

\smallskip

\noindent{\bf Proof of Theorem~\ref{thm:2MMGSs}.}
  Due to the saturation constraint $| \tmmathbf{u}(\xi) |^2 = 1$ for a.e. $\xi\in\Stwo^2$, the
  Euler-Lagrange equations for $\Fk$ reads, in strong form, as
  \begin{equation}
    \tmmathbf{u} (\xi) \times (- \Delta_{\xi}^{\ast} \tmmathbf{u} (\xi) +
    \kappa (\tmmathbf{u} (\xi) \cdot \tmmathbf{n}(\xi)) \tmmathbf{n}(\xi))
    = \; 0
    \quad \forall \xi \in \Stwo^2 . \label{eq:ELFkappa}
  \end{equation}
  Since $- \Delta_{\xi}^{\ast} \tmmathbf{n} (\xi) = 2\tmmathbf{n} (\xi)$, the
  vector fields $\tmmathbf{u}_{\pm} (\xi) \assign \pm \tmmathbf{n} (\xi)$ 
  satisfy {\eqref{eq:ELFkappa}} and, therefore, are stationary points of
  $\Fk$. 
  
  Next, consider the second order
  variation $\Fk'' (\tmmathbf{u}, \cdot)$ of $\Fk$ at $\tmmathbf{u} \in H^1
  (\Stwo^2, \Stwo^2)$, which reads, for every $\tmmathbf{v} \in H^1 (\Stwo^2,
 \RR^3)$ such that $\tmmathbf{u} (\xi) \cdot
  \tmmathbf{v} (\xi) = 0$ for {\tmabbr{a.e.}} in $\Stwo^2$, as
  \begin{equation}
    \Fk'' (\tmmathbf{u}, \tmmathbf{v}) = \int_{\Stwo^2} | \nabla^{\ast}_{\xi}
    \tmmathbf{v} |^2 - | \nabla^{\ast}_{\xi} \tmmathbf{u} |^2 | \tmmathbf{v}
    |^2 \mathd \xi + \kappa \int_{\Stwo^2} (\tmmathbf{v} \cdot \tmmathbf{n})^2 -
    (\tmmathbf{u} \cdot \tmmathbf{n})^2 | \tmmathbf{v} |^2 \mathd \xi .
  \end{equation}
  In particular, for $\tmmathbf{u} (\xi) \assign \pm \tmmathbf{n} (\xi)$,
  noting that $| \nabla^{\ast}_{\xi} \tmmathbf{n} (\xi) |^2 = 2$, we get
  \begin{eqnarray}\label{eqSVar}
    \Fk'' (\pm \tmmathbf{n}, \tmmathbf{v}) & = & \int_{\Stwo^2} |
    \nabla^{\ast}_{\xi} \tmmathbf{v} |^2 - (\kappa + 2) | \tmmathbf{v} |^2
    \mathd \xi . 
  \end{eqnarray}
  Now, for $\tmmathbf{u} (\xi) \assign \pm \tmmathbf{n} (\xi)$, the condition
  $\tmmathbf{u} (\xi) \cdot \tmmathbf{v} (\xi) = 0$ {\tmabbr{a.e.}} in $\Stwo^2$
  forces the variation $\tmmathbf{v}$ to be tangent to $\Stwo^2$. Thus, the Poincar{\'e} inequality {\eqref{eq:PoincareTangent}}
  holds and we end up with the estimate
  \[ \Fk'' (\pm \tmmathbf{n}, \tmmathbf{v}) \geqslant - \kappa \int_{\Stwo^2} |
     \tmmathbf{v} |^2 \mathd \xi, \]
  from which the strict local minimality follows.
  
  To show instability of  $\tmmathbf{u} (\xi) \assign \pm \tmmathbf{n} (\xi)$ for $\kappa>0$ we return to the second variation \eqref{eqSVar}. Using a test function $\tmmathbf{u}(\xi)=\sqrt{4\pi} \tmmathbf{y}_{1, 0}^{(2)} (\xi)$ from the Remark~\ref{rem12} we obtain negativity of the second variation which implies instability of $\tmmathbf{u} (\xi) \assign \pm \tmmathbf{n} (\xi)$.
  
  Finally, for $\kappa \leqslant - 4$, the global minimality of $\pm
  \tmmathbf{n} (\xi)$ is clear from Theorem \ref{thm:mainthmpoinc} and the
  fact that $\Fk$ is constrained to $H^1 (\Stwo^2, \Stwo^2)$.
  
\section{Acknowledgements}
GDF acknowledges support from the Austrian Science Fund (FWF)
through the special research program {\tmem{Taming complexity in partial
differential systems}} (Grant SFB F65) and of the Vienna Science and
Technology Fund (WWTF) through the research project {\tmem{Thermally
controlled magnetization dynamics}} (Grant MA14-44), VS acknowledges support from 
EPSRC grant EP/K02390X/1 and Leverhulme grant RPG-2014-226.

The work AZ is supported by the Basque Government through the BERC 2018-2021
program, by Spanish Ministry of Economy and Competitiveness MINECO through BCAM
Severo Ochoa excellence accreditation SEV-2017-0718 and through project MTM2017-82184-R
funded by (AEI/FEDER, UE) and acronym ``DESFLU''.
\par The authors would like to thank the Isaac Newton Institute for Mathematical Sciences for support and hospitality during the programme {\it ``The design of new materials"} when work on this paper was undertaken. This work was supported by: EPSRC grant numbers EP/K032208/1 and EP/R014604/1.

\end{document}